\newtheorem{theorem}{Theorem}[section]
\newtheorem{proposition}[theorem]{Proposition}
\newtheorem*{conjecture*}{Conjecture}
\newtheorem*{corollary*}{Corollary}
\newtheorem*{claim*}{Claim}
\newtheorem*{theorem*}{Theorem}
\theoremstyle{remark}
\theoremstyle{definition}
\newcommand{\one}{\mathbf{1}}
\newcommand{\A}{\mathcal{A}}
\newcommand{\R}{\mathbb{R}}
\newcommand{\Z}{\mathbb{Z}}
\newcommand{\N}{\mathbb{N}}
\newcommand{\CA}{\mathcal{A}}
\newcommand{\CI}{\mathcal{I}}
\newcommand{\CL}{\mathcal{L}}
\newcounter{note}
\title{Counting generic measures for a subshift of linear growth}
\author{Van Cyr}
\address{Bucknell University, Lewisburg, PA 17837 USA}
\email{van.cyr@bucknell.edu}
\author{Bryna Kra}
\address{Northwestern University, Evanston, IL 60208 USA}
\email{kra@math.northwestern.edu}
\subjclass[2010]{}
\keywords{}
\subjclass[2010]{37B10 (primary), 37A25, 68R15}
\keywords{subshift, automorphism, block complexity}
\thanks{The second author was partially supported by NSF grant.}
\begin{document}

\begin{abstract}
In 1984 Boshernitzan proved an upper bound on the number of ergodic measures for a minimal subshift of linear block growth and asked if it could be lowered without further assumptions on the shift.  We answer this question, showing that Boshernitzan's bound is 
sharp.  We further prove 
that the same bound holds for the, a priori, larger set of nonatomic generic measures, and that this bound 
remains valid even if one drops the assumption of minimality.  Applying these results to interval exchange transformations, 
we give an upper bound on the number of nonatomic generic measures of a minimal IET, answering a question recently 
posed by Chaika and Masur. 
\end{abstract}

\maketitle

\section{Introduction}

Let $(X, \sigma)$ be a subshift, meaning that $X\subset\CA^\Z$, where $\CA$ is a finite alphabet, and $X$ is a closed set that is invariant under the left shift $\sigma\colon \CA^\Z\to\CA^\Z$.  
A classic problem is to find conditions
that imply $(X,\sigma)$ is uniquely ergodic or, more generally, has a finite number of ergodic measures.    
In the 1980's, 
Boshernitzan~\cite{Bos} showed that the complexity of the subshift can be used to obtain such a result.  More precisely, 
if $P_X(n)$ is the number of words of length $n$ which occur in any $x\in X$, he showed that if $(X,\sigma)$ is minimal and 
$\limsup_{n\to\infty}P_X(n)/n < 3$, then it is uniquely ergodic (see also related results in~\cite{Bos2}).  More generally, 
Boshernitzan showed that if 
\begin{equation}
\label{eq:complexity}
\liminf_{n\to\infty}\frac{P_X(n)}{n}<k,
\end{equation} then there are at most $k-1$ ergodic measures.  
Some motivation for studying this problem is generalizing the well-known bound on the number of ergodic measures for an interval exchange transformation (IET), that had been previously proven, independently, by Katok and Veech.  Boshernitzan's Theorem applies to a much broader class of dynamical systems than the interval exchange transformations, but the bound 
he obtains is weaker than that of Katok and Veech in the case of an IET.  
Boshernitzan asked in~\cite{Bos}, and then again in~\cite{Bos1}, whether his bound could be lowered in this more general setting.  One of our main results answers Boshernitzan's question: for the class of minimal subshifts whose complexity function satisfies~\eqref{eq:complexity}, Boshernitzan's bound is a sharp bound for the number of nonatomic ergodic measures.  Our technique also shows that the bound is more general than originally stated: the same bound remains valid (and sharp) even without the assumption of minimality and even if one seeks to bound the (a priori, larger) set of nonatomic generic measures.

The particular case of minimal interval exchange 
transformations has been well studied (for example Katok~\cite{K}, Keane~\cite{Keane}, and Veech~\cite{V}).  A minimal $k$-interval exchange transformation ($k$-IET) has a natural symbolic cover, its natural coding, and this subshift satisfies the hypothesis of Boshernitzan's Theorem.  As an application, 
this shows that a minimal $k$-IET (see Section~\ref{sec:IET} for the definition) has at most $k-1$ ergodic measures.  
The optimal bound of $\lfloor k/2\rfloor$ was proven, independently, 
by Katok~\cite{K} and Veech~\cite{V}.  In a recent paper, Chaika and Masur~\cite{CM} studied the broader class of generic measures for an IET and asked whether there are 
bounds on the number of such measures.  An interesting facet of this problem is that 
although several quite different proofs of the bound given by Katok and Veech for the number of ergodic measures exist in the literature, they all use ergodicity in an essential way. 

If $X$ is a compact metric space, $\mathcal B$ the Borel $\sigma$-algebra, $\mu$  a Borel probability measure on $\mathcal B$,
and $T\colon X\to X$ is a measurable map preserving the measure $\mu$, a point $x\in X$ is a
{\em generic point for the measure $\mu$}  if for every continuous function $f\colon X\to\R$,
$$
\lim_{N\to\infty}\frac{1}{N}\sum_{n=0}^{N-1}f(T^nx)= \int f\, d\mu.  
$$
The measure $\mu$ is {\em generic} if it has a generic point.
Thus, by the Pointwise Ergodic Theorem, if the measure $\mu$ is ergodic almost every point is generic.  However, a generic 
measure need not be ergodic.  Chaika and Masur~\cite{CM} constructed a $6$-interval exchange transformation 
that has a generic, but not ergodic, measure.  They asked if there is a bound on the number of generic measures for a $k$-IET.  We show: 
\begin{theorem}
\label{th:upper}
If $(X, \sigma)$ is a subshift and there exists $k\in\N$ such that 
$$
\liminf_{n\to\infty}\frac{P_X(n)}{n} < k,
$$
then $(X,\sigma)$ has at most $k-1$ distinct, nonatomic, generic measures.  
\end{theorem}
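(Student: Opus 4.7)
I would follow Boshernitzan's overall strategy—bounding the number of invariant measures via Rauzy graph combinatorics—while replacing the Pointwise Ergodic Theorem by the definition of a generic point. Suppose for contradiction that $(X,\sigma)$ admits $k$ distinct nonatomic generic measures $\mu_1,\dots,\mu_k$ with respective generic points $x_1,\dots,x_k \in X$, and fix a subsequence $n_j\to\infty$ with $P_X(n_j)<k n_j$, which exists by hypothesis. The goal is to show that the combinatorics of the $k$ orbits forces $P_X(n_j)\ge k n_j$ along this subsequence, producing a contradiction.

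My first step is to convert the measure-theoretic hypothesis into combinatorial input. For every word $w$ the cylinder $[w]\subseteq X$ is clopen, so $\one_{[w]}$ is continuous on $X$; applying the definition of genericity to $f=\one_{[w]}$ gives
\[
\mu_i([w]) \;=\; \lim_{N\to\infty}\frac{1}{N}\#\{0\le n<N:\sigma^n x_i\in[w]\}.
\]
Hence each $\mu_i$ is fully recoverable from word frequencies along $x_i$—exactly the combinatorial access to $\mu_i$ that Birkhoff's theorem provides in Boshernitzan's ergodic setting. Using nonatomicity to deduce $\max_{w\in L_n(X)}\mu_i([w])\to 0$ and the distinctness of the $\mu_i$ to obtain pairwise-different cylinder-measure vectors on $L_n(X)$ for all large $n$, I would view each $\mu_i$ as a normalized flow on the Rauzy graph $R_n$ (vertices $L_n(X)$, edges $L_{n+1}(X)$), with the orbit of $x_i$ tracing a concrete infinite walk whose asymptotic edge-frequency profile equals that flow.

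The final step is to convert this picture—$k$ walks with pairwise-distinct asymptotic edge-frequency profiles—into the target word-count lower bound $P_X(n_j)\ge k n_j$. This is where I expect the main obstacle, and the real new content beyond Boshernitzan, to lie: generic measures need not be affinely independent, since, as Chaika--Masur exhibit, $\tfrac{1}{2}(\mu_1+\mu_2)$ may itself be generic alongside ergodic $\mu_1,\mu_2$. Boshernitzan's counting relies on the $k$ ergodic measures being extreme in the simplex of invariant measures, hence affinely independent, which forces the flow polytope on $R_n$ to have dimension at least $k-1$; that mechanism is unavailable for generic measures. My replacement would operate at the level of the $k$ concrete generic points, which remain distinct orbits with pairwise-different frequency data even when the associated measures are affinely dependent. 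I would attempt a Rokhlin-tower or return-word decomposition constructed separately around each $x_i$ and then combine them via a pigeonhole or disjointness argument, showing that the $k$ walks collectively demand at least $k n_j$ distinct edges at levels where the complexity is small. Making this combinatorial amplification quantitative—turning pairwise distinctness of orbits into $\Omega(n)$ extra words per generic point at the bad levels $n_j$—is the step I expect to be the most delicate, and it is here that the new idea replacing ergodicity must do its work.
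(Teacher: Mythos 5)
Your setup is fine and you have correctly located the crux: genericity applied to indicator functions of cylinders recovers each $\mu_i$ from word frequencies along $x_i$, and Boshernitzan's extreme-point/affine-independence mechanism is genuinely unavailable here. But at exactly that point your proposal stops being a proof: the ``Rokhlin-tower or return-word decomposition \ldots combined via a pigeonhole or disjointness argument'' is named but never constructed, and you yourself flag the quantitative amplification (pairwise-distinct orbits $\Rightarrow$ roughly $kn$ words at the low-complexity scales) as the delicate missing step. That step is the entire content of the theorem, so as written there is a genuine gap rather than an alternative route.

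For comparison, the paper's mechanism needs only \emph{pairwise} distinctness of the measures, never any affine structure. Fixing small $\delta$, one chooses for each pair a separating word and shows, by writing a window average as a difference of two long Birkhoff-type averages, that every subword of $x_i$ of length $\ell_N=\lfloor\delta N\rfloor$ starting within the first $\approx(2-\delta)N$ symbols has empirical frequencies within $\varepsilon/2$ of $\mu_i$; hence the resulting word sets $\mathcal{W}_i(N)\subseteq\mathcal{L}_{\ell_N}(X)$ are pairwise disjoint in $i$. Then, for each fixed $i$, one runs a dichotomy on the length-$N$ windows of $x_i$ at positions $M,\dots,\lfloor(1-\delta)N\rfloor$: either they are already pairwise distinct, giving $\gtrsim(1-\delta)N$ words of length $N$ all of whose length-$\ell_N$ subwords lie in $\mathcal{W}_i(N)$, or a repetition occurs, and a Fine--Wilf type lemma of Epifanio--Koskas--Mignosi (this is where nonatomicity enters, via $x_i$ not being eventually periodic --- not via $\max_w\mu_i([w])\to0$) produces elsewhere in $x_i$ a run of $N-\ell_N$ distinct length-$N$ words whose leftmost length-$\ell_N$ subword lies in $\mathcal{W}_i(N)$. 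Disjointness of the $\mathcal{W}_i(N)$ makes these $d$ families disjoint, so $P_X(N)\geq d\,(1-2\delta)N$ for \emph{all} large $N$, which is what contradicts $\liminf P_X(n)/n<k$ when $d=k$. If you want to salvage your Rauzy-graph formulation, you would still need an argument of exactly this kind to convert pairwise-different edge-frequency profiles into $\Omega(n)$ \emph{disjoint} vertex sets per walk, including a treatment of the case where a single walk revisits the same length-$N$ window (the periodicity issue the EKM lemma handles); nothing in the proposal currently addresses either point.
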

In particular, this applies to interval exchange transformations by passing to the natural cover.  
Theorem~\ref{th:upper} generalizes Boshernitzan's Theorem~\cite{Bos} in two ways:  
there is no assumption of minimality and our bound holds for the more general class of generic measures.  We also give an analogous bound for $\limsup$ (note the technical assumption is vacuous for minimal subshifts that are not uniquely ergodic). 
\begin{theorem}\label{th:upper-sup} 
Suppose $(X, \sigma)$ is a subshift and there exists $k\in\N$ such that 
$$
\limsup_{n\to\infty}\frac{P_X(n)}{n} < k.  
$$
If $(X,\sigma)$ has a generic measure $\mu$ and a generic point $x_{\mu}$ for which the orbit closure 
$$ 
\overline{\{\sigma^kx_{\mu}\colon k\in\Z\}} 
$$ 
is not uniquely ergodic, then $(X,\sigma)$ has at most $k-2$ distinct, nonatomic, generic measures. 
\end{theorem}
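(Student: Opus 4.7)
The plan is to argue by contradiction: assume $(X,\sigma)$ admits $k-1$ distinct nonatomic generic measures $\mu_1,\ldots,\mu_{k-1}$, labelled so that $\mu_1=\mu$ is the distinguished measure from the hypothesis, and derive a contradiction using the non-unique ergodicity of $Y:=\overline{\{\sigma^n x_\mu:n\in\Z\}}$ together with the strict $\limsup$ complexity bound. The underlying observation is that the $\limsup$ hypothesis implies the $\liminf$ hypothesis of Theorem~\ref{th:upper}, so the baseline of $k-1$ distinct nonatomic generic measures is available as a ceiling that must not be exceeded.

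The first step produces an auxiliary measure on $Y$. Since $Y$ is not uniquely ergodic, its simplex of invariant measures has at least two distinct extreme points, so I choose $\nu$ to be an ergodic $\sigma$-invariant measure on $Y$ that differs from $\mu$ (if $\mu$ is ergodic) or from at least one ergodic component of $\mu$ (otherwise). By the Pointwise Ergodic Theorem, $\nu$-almost every point $y\in Y$ is generic for $\nu$, so $\nu$ is a generic measure for $(X,\sigma)$ in its own right. A case split follows. If $\nu$ is nonatomic and distinct from every $\mu_i$, the collection $\{\mu_1,\ldots,\mu_{k-1},\nu\}$ has $k$ members and contradicts Theorem~\ref{th:upper} directly, yielding the desired bound. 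The remaining possibilities are (i) that $\nu$ coincides with some $\mu_j$, so that two generic measures are supported on the proper subsystem $Y\subsetneq X$, or (ii) that every candidate $\nu$ on $Y$ beyond $\mu$ is atomic (i.e., supported on a periodic orbit) and therefore does not itself count toward the bound.

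In the harder cases I would revisit the Rauzy-graph and word-frequency machinery underlying the proof of Theorem~\ref{th:upper}: every invariant probability measure, whether ergodic or merely generic and whether atomic or nonatomic, determines a frequency vector $v^{(n)}\in\R^{L_n(X)}$ satisfying Kirchhoff-type conservation laws at level $n$, and the dimension of the affine span of such vectors is controlled by $P_X(n)$. The $\limsup$ hypothesis upgrades this control from ``along a subsequence'' to ``at every sufficiently large $n$'', producing a rigid linear-algebraic setting. In case (i) the vectors attached to $\mu$ and $\mu_j$ both have support in the proper coordinate subspace $\R^{L_n(Y)}\subsetneq\R^{L_n(X)}$, while in case (ii) the extra ergodic atomic measure on $Y$ contributes a genuinely new independent vector to the span of invariant measures even though it does not enlarge the count of nonatomic generics. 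In each scenario the goal is to extract one additional independent linear relation that persists uniformly in $n$ and survives passage to the limit, forcing the ceiling to drop from $k-1$ to $k-2$. Establishing this uniform rank reduction, and verifying that the slot it consumes is chargeable against a nonatomic rather than an atomic degree of freedom, is the main technical obstacle; this is precisely where the proof must strengthen, rather than merely invoke, the arguments supporting Theorem~\ref{th:upper}.
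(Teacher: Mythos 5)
There is a genuine gap. Your contradiction scheme only closes in the easy case: if the second ergodic measure $\nu$ on $Y=\overline{\{\sigma^n x_\mu\colon n\in\Z\}}$ happens to be nonatomic and distinct from $\mu_1,\dots,\mu_{k-1}$, then indeed Theorem~\ref{th:upper} (whose hypothesis follows from the $\limsup$ bound) gives a contradiction. But the two remaining cases --- $\nu$ coinciding with one of the listed measures, or every ergodic measure on $Y$ other than (a component of) $\mu$ being atomic --- are exactly the situations the theorem must handle, and for these you offer only a program (``Rauzy-graph and word-frequency machinery,'' a ``uniform rank reduction'' chargeable to a nonatomic degree of freedom) that you yourself flag as the main unsettled obstacle. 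Nothing in the proof of Theorem~\ref{th:upper} is of linear-algebraic/Rauzy-graph type in this paper, so there is no existing machinery being ``strengthened''; the key step is simply missing. In particular, an atomic ergodic measure on $Y$ does contribute extra words, but quantifying that contribution as an extra $n$ words of length $n$ at \emph{every} large scale (which is what dropping the bound from $k-1$ to $k-2$ requires) is precisely the content you have not supplied.

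The paper's route is different and avoids your case analysis entirely: Theorem~\ref{th:upper-sup} is read off from the second half of Theorem~\ref{theorem:main}, which asserts that $d$ distinct nonatomic generic measures together with a generic point $x_\mu$ whose orbit closure is not uniquely ergodic force $\limsup_n P_X(n)/n\geq d+1$. The only use made of non-unique ergodicity is that for each scale $\ell_N$ there are infinitely many positions $L$ with $u_\mu(\ell_N,L)\notin\mathcal{W}_\mu(N)$, where $\mathcal{W}_i(N)$ is the set of length-$\ell_N$ windows of $x_i$ whose empirical frequencies of the distinguishing words are within $\varepsilon/2$ of $\mu_i$; taking the first such ``bad'' position $L$, the $N-\ell_N$ words of length $N$ ending just before it have a leftmost $\ell_N$-window in $\mathcal{W}_\mu(N)$ but contain the bad window at pairwise distinct offsets, so they are new and disjoint from all the sets $\mathcal{S}_j,\mathcal{T}_j$ already charged to the $d$ measures. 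Combined with a Fine--Wilf/minimal-period argument (via Theorem~\ref{theorem:qz-revised}) guaranteeing infinitely many $N$ at which the relevant length-$N$ windows of $x_\mu$ are pairwise distinct, this yields $P_X(N)\gtrsim (d+1)N$ along a subsequence. No second invariant measure on $Y$ is ever introduced, so whether it is atomic or coincides with some $\mu_j$ never arises. To salvage your approach you would have to prove, not merely posit, a uniform-in-$n$ lower bound on complexity coming from an atomic or repeated measure on $Y$; as written, the argument does not establish the theorem.
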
 
Recently Damron and Fickenscher~\cite{DF} proved a related result, showing that any minimal 
shift $(X, \sigma)$ whose complexity function satisfies $P_X(n) = kn+c$ for some constant $c$, 
$k\geq 4$, and all $n$ sufficiently large has at most $k-2$ ergodic measures.  

Moreover, we show that these theorems are sharp, even if $X$ is assumed to be minimal and the measures are required to be ergodic. 
\begin{theorem}\label{th:sharp} 
Suppose $d>1$ is an integer.  There exists a minimal subshift $(X,\sigma)$ which has exactly $d$ ergodic measures, zero nonergodic generic measures, and which satisfies 
\begin{eqnarray*} 
\liminf_{n\to\infty}\frac{P_X(n)}{n}&=&d; \\ 
\limsup_{n\to\infty}\frac{P_X(n)}{n}&=&d+1. 
\end{eqnarray*} 
\end{theorem}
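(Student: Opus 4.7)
The plan is to build $X$ as an S-adic subshift from a carefully chosen sequence of substitutions on a $d$-letter alphabet $\mathcal{A}=\{a_1,\dots,a_d\}$. Specifically, I construct inductively an ``almost diagonal'' sequence of substitutions $\tau_N\colon\mathcal{A}\to\mathcal{A}^*$ in which each $\tau_N(a_i)$ is a long word where the letter $a_i$ occurs with density $1-\varepsilon_N$, the other letters appearing only in a single short cluster (placed, say, near the center of $\tau_N(a_i)$) to guarantee minimality. The subshift $X$ is the set of bi-infinite sequences all of whose finite factors occur inside some composition $\tau_1\circ\tau_2\circ\cdots\circ\tau_N(a_i)$.

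Minimality of $X$ follows because for all sufficiently large $M>N$ each word $\tau_1\cdots\tau_N(a_i)$ appears as a factor of $\tau_1\cdots\tau_M(a_j)$ for every $j$: the off-diagonal clusters inside the successive $\tau_\ell$'s propagate every letter across every Rokhlin level. The exact count of $d$ ergodic measures comes from analyzing the products of the incidence matrices $M_N$ of $\tau_N$. Choosing $\varepsilon_N\to 0$ summably fast, a Perron--Frobenius argument for products of nonnegative column-stochastic matrices shows that the limiting simplex of stationary distributions is a $(d-1)$-dimensional simplex with extreme points $\mu_1,\dots,\mu_d$, with $\mu_i$ assigning letter-frequency $1$ to $a_i$. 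By the Bratteli--Vershik correspondence, these are the ergodic invariant measures for $(X,\sigma)$.

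To rule out non-ergodic generic measures, observe that every $x\in X$ has a canonical nested hierarchy of factorizations into level-$N$ blocks. If $x$ is generic for some $\mu$, the Birkhoff averages of each indicator $\one_{a_i}$ along $x$ must converge; by the hierarchical structure those limits coincide with the letter-frequency vector of $x$ along its level-$N$ decomposition, which in turn converges to an extreme point $e_{i_0}$ of the invariant simplex. Hence $\mu=\mu_{i_0}$ is ergodic. For the complexity computation, let $L_N=|\tau_1\cdots\tau_N(a_1)|$. For $n$ much smaller than $L_N$, every length-$n$ factor of $X$ is essentially contained in a single level-$N$ block, of which there are $d$ types, each contributing roughly $n$ distinct length-$n$ subwords; this yields $P_X(n)/n\to d$ along such $n$, giving $\liminf P_X(n)/n=d$. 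For $n$ slightly below $L_N$, every length-$n$ factor must straddle at least one level-$N$ block boundary, and one gains an additional family of ``boundary'' words of size roughly $n$, pushing $P_X(n)/n$ up to $d+1$ along the subsequence $n=L_N-o(L_N)$, giving $\limsup P_X(n)/n=d+1$.

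The main obstacle is the simultaneous calibration of the parameters. The $\varepsilon_N$ must decay summably to keep the $d$ stationary vectors mutually separated (so no two $\mu_i$'s merge in the limit) while remaining large enough that the off-diagonal clusters actually occur (to preserve minimality). The lengths $|\tau_N(a_i)|$ must be tuned so that between successive scales $L_N$ the complexity has enough room to relax back down to $dn$ (forcing $\liminf=d$ rather than something strictly larger), while at scale $L_N$ itself the boundary words contribute exactly the extra $n$ needed for $\limsup=d+1$. Finally, one must verify that no generic point for a convex combination $\sum_i\lambda_i\mu_i$ with multiple nonzero $\lambda_i$ can arise along a sparse sequence of times; this uses aperiodicity properties of the $\tau_N$'s to show that the hierarchical letter-frequency vector of any generic point is forced to an extreme point. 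Perron--Frobenius for stochastic matrix products, Birkhoff's theorem along the Kakutani--Rokhlin tower hierarchy, and direct factor enumeration handle each ingredient individually, but reconciling them without collapse is the technical heart of the construction.
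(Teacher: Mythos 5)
Your plan is a genuinely different construction style (an S-adic/substitutive scheme with diagonally dominant substitutions, Perron--Frobenius for the incidence-matrix products, and a direct analysis of generic points) from the paper's explicit nested concatenation of words $w_i^j$; but as written it has two genuine gaps, and both sit exactly where the paper spends its effort. First, the complexity calibration is asserted rather than proved, and the construction you describe does not obviously deliver the required \emph{upper} bounds $\liminf P_X(n)/n\leq d$ and $\limsup P_X(n)/n\leq d+1$. If every $\tau_N(a_i)$ contains a cluster of all the other letters, then at level $N$ there are many distinct admissible adjacencies of level-$N$ blocks (of the order of $d$ per image word, hence many overall), and for $n$ comparable to $L_N$ each distinct adjacency type can contribute on the order of $n$ new straddling words; nothing in your setup caps the total at $(d+1)n+o(n)$ rather than, say, $dn$ plus several extra families of size $n$. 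The paper avoids precisely this by forcing a rigid cyclic order of long runs (each block may be followed only by itself or by the ``next'' block, with run lengths controlled by the $\delta_j$-hierarchy in the inequalities on the $N^{[j]}_{(i,k)}$), and its Step 3 is a case analysis whose whole point is to show only one extra transitional family of size about $n$ ever appears at a given scale. You acknowledge this calibration is ``the technical heart'' but supply no mechanism for it, so the statements $\liminf=d$ and $\limsup=d+1$ are not established.

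Second, the claim that there are no nonergodic generic measures rests on the assertion that the hierarchical letter-frequency vector of any generic point ``is forced to an extreme point.'' That is not automatic and is in general false for hierarchical constructions of this kind: a generic point for a nonergodic measure has empirical averages converging to a barycenter $\sum_i\lambda_i\mu_i$, and the existence of such points is exactly the phenomenon exhibited by Chaika and Masur. Some specific mechanism must exclude them; the paper obtains this for free from its counting theorem (Theorem~\ref{theorem:main}): once $\liminf P_X(n)/n=d$ is known, there are at most $d$ nonatomic generic measures, and since the $d$ ergodic measures of this minimal, aperiodic system are already nonatomic and generic, no nonergodic generic measure can exist. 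You neither invoke such a counting result nor replace it with an argument, so this step is missing. (Relatedly, your Perron--Frobenius analysis of letter frequencies alone does not identify the full simplex of invariant measures; one needs the tower-level analysis over all word lengths, or again the complexity bound plus Theorem~\ref{theorem:main}, to cap the number of ergodic measures at $d$.)
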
 
 
We include several other examples in Section~\ref{section:sharp}, showing other senses in which Theorems~\ref{th:upper} and~\ref{th:upper-sup} can be said to be sharp.  

As an application of Theorem~\ref{th:upper}, we answer Chaika and Masur's question:  
\begin{theorem}\label{thm:1.1}
For $k> 2$, a minimal $k$-interval exchange transformation has at most $k-2$ generic measures.
\end{theorem}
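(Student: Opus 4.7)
The plan is to pass to the natural symbolic coding of the IET and then invoke Theorem~\ref{th:upper-sup}. Let $T\colon [0,1)\to [0,1)$ be a minimal $k$-IET with discontinuities $\beta_1<\dots<\beta_{k-1}$, and let $(X,\sigma)$ be the subshift obtained from the natural coding $\pi\colon [0,1)\to \{1,\dots,k\}^{\Z}$, where $\pi(x)_n = j$ whenever $T^n x$ lies in the $j$th exchanged interval. A classical computation yields $P_X(n)\leq (k-1)n+1$ for every $n\geq 1$, so
$$
\limsup_{n\to\infty}\frac{P_X(n)}{n}\leq k-1<k.
$$
Since $T$ is minimal and has infinite orbit, $(X,\sigma)$ is an infinite minimal subshift; in particular it has no periodic points, and every $\sigma$-invariant Borel probability measure on $X$ is nonatomic.

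Next I would verify that $\pi_*$ induces a bijection between the generic measures for $T$ and the generic measures for $\sigma$. The coding $\pi$ is injective off the countable union $E$ of orbits of the discontinuity points, and $E$ has zero mass under every nonatomic $T$-invariant measure; by symmetry $\pi^{-1}(\pi(E))$ is also a countable union of orbits. Consequently $x\in [0,1)$ is a generic point for a nonatomic $T$-invariant measure $\nu$ if and only if $\pi(x)$ is a generic point for $\pi_*\nu$, and distinct $T$-invariant nonatomic measures push forward to distinct nonatomic $\sigma$-invariant measures on $X$. Bounding the number of generic measures for $T$ thus reduces to bounding the number of nonatomic generic measures on $(X,\sigma)$.

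I would then split into two cases. If $(X,\sigma)$ is uniquely ergodic, then by Oxtoby's theorem the unique invariant measure is the only generic measure, and $1\leq k-2$ since $k>2$. Otherwise $(X,\sigma)$ is minimal and not uniquely ergodic; by minimality, for every $z\in X$ the orbit closure $\overline{\{\sigma^n z\colon n\in\Z\}}$ equals $X$, which is not uniquely ergodic. The hypothesis of Theorem~\ref{th:upper-sup} is therefore satisfied with threshold $k$, giving at most $k-2$ distinct nonatomic generic measures on $X$, and hence at most $k-2$ generic measures for $T$.

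The main obstacle I anticipate is the bookkeeping in the symbolic transfer step: one must be certain that every nonatomic generic measure on $X$ arises as the pushforward of a generic measure for $T$ (so that the count is not artificially deflated on the IET side), and that no generic measure on $T$ is lost under $\pi_*$. Both claims follow from the fact that the exceptional set on which $\pi$ fails to be a bijection is a countable union of orbits and so carries no mass for nonatomic invariant measures, but this must be written out with care, in particular to argue that a generic point $y\in X$ for $\pi_*\nu$ can be chosen in $\pi([0,1)\setminus E)$ and hence pulled back to a generic point for $\nu$ itself.
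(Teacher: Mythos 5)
Your proposal is correct and takes essentially the same route as the paper: pass to the natural coding with $P_X(n)\le (k-1)n+1$, transfer generic measures across the almost-everywhere bijective coding (the paper does this by pulling back along the factor map $\phi\colon X\to[0,\lambda]$ and using nonatomicity to handle the countably many multiple preimages), and apply the $\limsup$ bound of Theorem~\ref{th:upper-sup}, observing that its orbit-closure hypothesis is automatic for a minimal, non-uniquely-ergodic subshift while the uniquely ergodic case gives a single generic measure and $1\le k-2$. The paper leaves this final case distinction implicit when it cites Theorem~\ref{theorem:main}, but it is exactly the argument you spell out.
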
 
For $k=2$, a minimal $2$-interval exchange is an ergodic rotation, which is uniquely ergodic.  
For $k=3$ and $4$, Theorem~\ref{thm:1.1} is sharp upper bound, but we do not know if it is sharp for $k\geq 5$.  In particular, we do not know
if we can improve the symbolic result of Theorem~\ref{th:upper} for systems that arise
as the natural coding of an interval exchange transformation.  We also do not know if there can be a second  generic measure in the example of Chaika and Masur, nor if a $6$-interval exchange 
with three ergodic measures can also have a generic (and obviously nonergodic) measure.  

\section{Background and notation}

If $\A$ is a finite alphabet, a {\em word} $w$ in the alphabet is a concatenation of letters in $\A$ and
the length $|w|$ of the word is the number (finite or infinite) of letters.  A {\em word $w = w_1\ldots w_\ell$ occurs  in a word $u = u_1\ldots u_k$} if
there is some $m\in\{1, \ldots, k-\ell\}$ such that $w_1=u_m, \ldots, w_\ell = u_{m+\ell}$, and we refer to $w$ as a {\em subword} of $u$.
The analogous definitions hold for a finite word $w$ occurring as a subword of an infinite word $u$.

A {\em language} $\mathcal L$ is a set of (finite) words such that if $w\in\mathcal L$, then any subword is also contained
in $\mathcal L$.  The language {\em determined by a word} (finite or infinite) is the collection of all finite subwords of the
word.  We let $\mathcal L_n$ denote all the words in the language $\mathcal L$ of length $n$.  If $w\in\mathcal L$, we write $[w]$ for the {\em cylinder set determined by $w$},
meaning that
$$[w] = \{u\in \mathcal L\colon \text{ the first } |w| \text{ symbols of } u \text{ agree with } w\}.
$$

We assume that the alphabet $\A$ is endowed with the discrete topology and if $x\in\CA^\Z$, we use 
$x(n)$ to denote the value of $x$ at $n\in\Z$.  The space $\A^\Z$ is a compact metric space when endowed with the product topology (and a compatible metric). 

A {\em subshift $(X,\sigma)$} is a closed subset $X\subset \A^{\Z}$ that is invariant under the left shift $\sigma\colon \A^{\Z}\to \A^{\Z}$ defined by $(\sigma x)(n) = x(n+1)$.
If $\mathcal L$ is the language of the system $X$, meaning the set of all finite subwords that arise
for any $x\in X$, we write $\mathcal L = \mathcal L(X)$ and we write $\mathcal L _n = \mathcal L_n(X)$ for the words of length $n$.
We define the {\em complexity function} $P_X\colon X\to \N$ by
$$
P_X(n) = |\mathcal L_n(X)|.
$$

For a word $w\in \mathcal L(X)$, we write $\one_{[w]}$ for the indicator function of the word $w$.
We say that $x = \bigl(x(n)\bigr)_{n\in\Z}\in X$ is {\em periodic} if there exists $m\neq 0$ such that $x(m+n) = x(n)$ for all $n\in\Z$ 
and otherwise it is {\em aperiodic}.  The point $x$ is {\em eventually periodic} if there exists $m\neq 0$ and $N\in\N$ such that  $x(m+n) = x(n)$ for all $n\geq N$.

For a a system $(X, \sigma)$, the {\em orbit} of $x\in X$ is defined to be $\{\sigma^n x\colon n\in\Z\}$ and the system
 is {\em minimal} if the orbit closure $ \overline{\{\sigma^nx\colon n\in\Z\}}=X$ for any $x\in X$.

We make use of the following theorem (though stated differently) of Epifanio, Koskas, and Mignosi~\cite{EKM}:
\begin{theorem}[{\cite[Theorem 2.2]{EKM}}]
\label{theorem:qz-revised}
Assume $x\in\mathcal{A}^{\N}$ is not eventually periodic and  
fix $M,N_0\in\N$.  Suppose 
that for some $N\geq N_0$, there exist $M\leq m_1<m_2\leq N$ such that $w_x(N,m_1)=w_x(N,m_2)$, 
where 
$$
w_x(N,m):=(x(m), x(m+1), x(m+2), \dots, x(m+N-1)).  
$$
Then there exists $K\geq m_2$ such that 
\begin{enumerate}
	\item (Distinct Words Condition):  for all $K\leq k_1<k_2\leq K+N-N_0$ we have $w_x(N,k_1)\neq w_x(N,k_2)$;
	\item (Prefix First Occurrence Condition): for all $K\leq k<K+N-N_0$ there exists $M\leq l_k\leq N$ such that $w_x(N_0,k)=w_x(N_0,l_k)$.  
\end{enumerate}
\end{theorem}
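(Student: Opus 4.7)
The plan is to use the given repetition $w_x(N, m_1) = w_x(N, m_2)$ to locate a right-maximal ``period-$d$ region'' of $x$, and to place the window $[K, K + N - N_0]$ so that it sits exactly at the boundary where this periodicity breaks. Writing $d := m_2 - m_1$, the hypothesis forces $x$ to have period $d$ on $[m_1, m_2 + N - 1]$, and I first reduce to the case where $d$ is the minimal period on this segment, by replacing $m_2$ with $m_1 + d_0$ whenever a smaller $d_0 \mid d$ is also a period (note that $m_1 + d_0 \leq m_2 \leq N$, so the hypothesis persists). Let $L$ be the largest integer such that $x|_{[m_1, L]}$ has period $d$; aperiodicity of $x$ forces $L < \infty$, and by maximality $x(L+1) \neq x(L+1-d)$.

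I take $K := L - N + 2$, which lies in the required range $K \geq m_2$ since $L \geq m_2 + N - 1$. Condition (2) is then nearly immediate: any $k \in [K, K + N - N_0 - 1]$ satisfies $k + N_0 - 1 \leq L$, so $w_x(N_0, k)$ lies entirely inside the periodic region and equals $w_x(N_0, l_k)$ for $l_k := m_1 + ((k - m_1) \bmod d) \in [m_1, m_2 - 1] \subseteq [M, N]$.

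To prove (1), I assume for contradiction that $w_x(N, k_1) = w_x(N, k_2)$ for some $K \leq k_1 < k_2 \leq K + N - N_0$, and set $d' := k_2 - k_1$. The repetition gives $x$ period $d'$ on $[k_1, k_1 + N - 1]$, an interval which contains position $L + 1$, so $x(L+1) = x(L+1-d')$. On the overlap $[k_1, L]$, $x$ carries both periods $d$ and $d'$, and a Fine--Wilf-type argument (combined with the minimality of $d$) should propagate period $\gcd(d, d')$ to $[m_1, L]$ and force $x(L+1-d') = x(L+1-d)$; this yields $x(L+1) = x(L+1-d)$, contradicting maximality of $L$.

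The principal obstacle is executing the Fine--Wilf step in (1), because the overlap length $|[k_1, L]| \geq N_0$ may fall short of the required $d + d' - \gcd(d, d')$. I expect to close the gap by iteration: replacing $(m_1, m_2)$ with the offending $(k_1, k_2)$ produces a strictly larger maximal periodic region (the new right-endpoint $L'$ satisfies $L' \geq L + 2$), and the process must terminate under aperiodicity of $x$ since the $L_i$ cannot grow without bound while the candidate periods $d_i \leq N - N_0$ remain bounded. Condition (2) is preserved through any such iteration by chaining the length-$N_0$ words backward through the nested periodic structures until they land in $[M, N]$.
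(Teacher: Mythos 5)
Your overall strategy (locate a maximal periodic region, place the window at the point where periodicity breaks, and derive a contradiction from Fine--Wilf) is the same as the paper's, but your placement of the window creates the gap you yourself acknowledge, and the proposed repair does not close it. With $K=L-N+2$ every length-$N$ word starting in $[K,K+N-N_0]$ already reaches past the break at $L+1$, so the doubly periodic overlap available for Fine--Wilf is only about $N_0$ symbols, which in general is far less than $d+d'-\gcd(d,d')$. Your fix is to iterate on the offending pair $(k_1,k_2)$ and to claim termination ``under aperiodicity of $x$ since the $L_i$ cannot grow without bound while the candidate periods $d_i\leq N-N_0$ remain bounded.'' That principle is false: a sequence that is not eventually periodic can contain periodic windows of bounded period and unbounded length (e.g.\ $x=010010001000010\cdots$ has period-$1$ windows of every length), so unbounded $L_i$ with bounded $d_i$ contradicts nothing. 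The linked structure of your iteration does not rescue this, because consecutive regions overlap in only about $N_0-1$ symbols, too few to merge their periods by Fine--Wilf; a priori the chain of regions could march to the right forever, and ruling that out is precisely the missing argument. So as written the proof of the Distinct Words Condition is incomplete.

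The paper avoids the issue by shifting the window left by one minimal period rather than iterating. It first extends the period-$(m_2-m_1)$ region maximally, say $w_x(N',m_1)$ is periodic of period $m_2-m_1$ but $w_x(N'+1,m_1)$ is not, then takes $p$ to be the \emph{minimal} period of $w_x(N',m_1)$ and sets $K=m_1+N'-N-p$. With this choice the length-$N$ words starting at the first $p$ positions of the window lie entirely inside the periodic region and are pairwise distinct by minimality of $p$; hence any offending pair $k_1<k_2$ must satisfy $k_2\geq K+p$, and then the prefix of length $p+(k_2-k_1)$ of $w_x(N+(k_2-k_1),k_1)$ still sits inside the periodic region, so it carries both periods $p$ and $k_2-k_1$ with length exactly $p+(k_2-k_1)$ --- enough for Fine--Wilf. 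The resulting period $\gcd(p,k_2-k_1)$ propagates to the whole word, which crosses the break at $m_1+N'$, giving the contradiction in one step, with no termination argument needed. Your treatment of the Prefix First Occurrence Condition and your reduction to a minimal period (note the minimal period of the length-$(N+d)$ segment divides $d$ by Fine--Wilf, which you use implicitly) are fine and essentially identical to the paper's; the repair needed is the placement of $K$ and the use of minimality of $p$ to force $k_2\geq K+p$, not iteration.
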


For completeness, we include the proof, but it is merely a translation of the proof in~\cite{EKM} using 
our hypotheses and emphasizing the stronger conclusion. 
\begin{proof}
Suppose $w_x(N,m_1)=w_x(N,m_2)$.  Then the word $w_x(N+m_2-m_1,m_1)$ is periodic of period $m_2-m_1$.  Since $x$ is not eventually periodic, there exists $N^{\prime}\geq N+m_2-m_1$ such that $w_x(N^{\prime},m_1)$ is periodic of period $m_2-m_1$, but $w_x(N^{\prime}+1,m_1)$ is not.  Let $1\leq p\leq m_2-m_1$ be the minimal period of $w_x(N^{\prime},m_1)$ and define $m_3:=m_1+N^{\prime}-N-p$.  By minimality of $p$ and the fact that $N\geq p$, if $m_3\leq i<j<m_3+p-1$ then $w_x(N,i)\neq w_x(N,j)$.  

For contradiction, suppose there exist $m_3\leq i<j\leq m_3+N$ such that $w_x(N,i)=w_x(N,j)$.  
Since $i,j$ cannot both be smaller than $m_3+p$, it follows that $j\geq m_3+p$.
The word $w_x(N+(j-i),i)$ is periodic of period $j-i$ and its prefix of length $p+j-i$ is periodic of period $p$.  By the Fine-Wilf Theorem~\cite{FW}, 
it follows that this prefix is periodic of period $\gcd(j-i,p)$.  Since this prefix has length at least $p$, it follows that $w_x(N+(j-i),i)$ is periodic of period $\gcd(j-i,p)$ and, in particular, is periodic of period $p$.  
But $w_x(N^{\prime}+1,m_1)$ is not periodic of period $p$, 
by the definition of $N^{\prime}$.  
This contradiction implies that $w_x(N,i)\neq w_x(N,j)$ for any $M\leq i<j\leq m_3+n-N_0$.

Since $w_x(N^{\prime},m_1)$ is periodic of period $p\leq n$ and the length $N_0$ prefix of $w_i(N,i)$ is a subword of $w_x(N^{\prime},m_1)$, the second statement follows.  
\end{proof}

\section{Main results} 
Theorems~\ref{th:upper} and~\ref{th:upper-sup} follow from the following estimate: 
\begin{theorem}\label{theorem:main}
Let $(X,\sigma)$ be a subshift which has at least $d\geq 1$ distinct, nonatomic, generic measures.  Then 
$$ 
\liminf_{n\to\infty}\frac{P_X(n)}{n}\geq d. 
$$ 
If, in addition, $(X,\sigma)$ has a generic measure $\mu$ and a generic point $x_{\mu}$ whose orbit closure 
$$
\overline{\{\sigma^kx_{\mu}\colon k\in\N\}} 
$$ 
is not uniquely ergodic, then 
$$ 
\limsup_{n\to\infty}\frac{P_X(n)}{n}\geq d+1. 
$$ 
\end{theorem}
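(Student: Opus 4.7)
The plan is to apply the Epifanio--Koskas--Mignosi theorem (Theorem~\ref{theorem:qz-revised}) to a generic point for each of the $d$ given measures, producing for each measure a family of $N-N_0+1$ distinct length-$N$ words inside its generic point, and then to argue that these $d$ families together account for $d(N-N_0+1)-o(N)$ distinct length-$N$ words in $\mathcal{L}_N(X)$. Fix distinct nonatomic generic measures $\mu_1,\dots,\mu_d$ with generic points $x_1,\dots,x_d$. Each $x_i$ is not eventually periodic, because the empirical measures along an eventually periodic orbit converge to the atomic measure on the eventual periodic orbit. Since the $\mu_i$ are pairwise distinct measures on $X$, there is some $N_0$ at which their projections to $\mathcal{L}_{N_0}(X)$ are pairwise distinct.

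For each $i$ and each large $N$, the non-eventual-periodicity of $x_i$ together with the finiteness of $\mathcal{L}_N(X)$ will produce a repetition of some length-$N$ window at positions $M\leq m_1<m_2\leq N$ of $x_i$, after which Theorem~\ref{theorem:qz-revised} yields a window $[K_i,K_i+N-N_0]$ in $x_i$ such that the $N-N_0+1$ length-$N$ subwords beginning there are pairwise distinct (\emph{Distinct Words Condition}) while their length-$N_0$ prefixes all occur at positions in $[M,N]$ of $x_i$ (\emph{Prefix First Occurrence Condition}). Denote the resulting set of length-$N$ words by $W_i\subseteq\mathcal{L}_N(X)$, so $|W_i|=N-N_0+1$. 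The aim is to show
\[
P_X(N)\;\geq\;\Bigl|\bigcup_{i=1}^d W_i\Bigr|\;\geq\;d(N-N_0+1)-o(N),
\]
which, upon dividing by $N$ and letting $N\to\infty$, gives $\liminf_{n\to\infty}P_X(n)/n\geq d$. For the $\limsup$ bound, the non-unique-ergodicity of the orbit closure $Y=\overline{\{\sigma^kx_\mu:k\in\Z\}}$ supplies at least one further invariant measure on $Y$ distinct from the $d$ generic measures at hand, producing $d+1$ pairwise separated invariant measures; rerunning the counting argument along a subsequence of $N$ realizing the $\limsup$ then gives $d+1$ in place of $d$.

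The main obstacle is the lower bound on $|\bigcup_i W_i|$, and it is genuinely delicate for non-ergodic generic measures. When the $\mu_i$ are ergodic and hence pairwise mutually singular, one has $d_{TV}(\mu_i|_{\mathcal{L}_n},\mu_j|_{\mathcal{L}_n})\to 1$ as $n\to\infty$, and a direct pairwise-overlap bound gives $|W_i\cap W_j|\leq (1-d_{TV})(N-N_0+1)$, which is $o(N)$ once $N_0$ is taken large. For generic but non-ergodic measures -- for instance, the Chaika--Masur example -- the measures need not be mutually singular, and the naive pairwise bound is no longer sufficient. The argument must instead exploit the interplay of the two EKM conclusions: the Prefix First Occurrence Condition pins the length-$N_0$ prefix statistics of $W_i$ to those of the initial segment $[M,N]$ of $x_i$, on which genericity forces a $\mu_i$-typical distribution, and the Distinct Words Condition then forces $W_i$ to be combinatorially spread out enough that distinct $\mu_i$ produce $W_i$ with at most lower-order overlap. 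Making this accounting precise -- especially controlling overlap when several $\mu_i$ share ergodic components but differ in weights -- is the technical heart of the argument.
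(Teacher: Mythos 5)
Your proposal leaves the crucial step unproven: the lower bound $\bigl|\bigcup_i W_i\bigr|\geq d(N-N_0+1)-o(N)$, which you yourself flag as ``the technical heart,'' is precisely where an argument is needed, and the mechanisms you sketch for it do not work. The bound $|W_i\cap W_j|\leq(1-d_{TV})(N-N_0+1)$ is unjustified even for ergodic measures: $W_i$ is a set of words read off at specific positions of $x_i$, and the total-variation distance between the distributions of $\mu_i$ and $\mu_j$ on $\mathcal{L}_N(X)$ gives no cardinality bound on the intersection of two such sets; for non-ergodic generic measures you concede you have no argument at all. The paper separates the measures by a different, purely generic mechanism: it fixes finitely many distinguishing words $w_{(j_1,j_2)}$ and shows, using only convergence of Birkhoff averages from the origin (with the quantitative choice $B=\delta/(16-4\delta)$), that \emph{every} window of the much shorter length $\ell_N=\lfloor\delta N\rfloor$ starting at any position $L\in[M,\lfloor(2-\delta)N\rfloor]$ of $x_i$ has empirical frequencies within $\varepsilon/2$ of the $\mu_i$-values; hence the sets $\mathcal{W}_i(N)$ of such short windows are literally pairwise disjoint. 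Length-$N$ words are then counted by tagging them with their $\ell_N$-subwords: either the initial windows $u_i(N,M),\dots,u_i(N,\lfloor(1-\delta)N\rfloor)$ are already distinct (a case you skip---note the repetition hypothesis of Theorem~\ref{theorem:qz-revised} need not hold, so you cannot invoke it unconditionally), or one applies Theorem~\ref{theorem:qz-revised}, whose Prefix First Occurrence Condition serves to place the leftmost $\ell_N$-subword of each produced length-$N$ word at a position in $[M,N]$ where the frequency control applies---not to constrain ``prefix statistics.'' Disjointness of the resulting sets $\mathcal{S}_i$, $\mathcal{T}_i$ across $i$ is then immediate and yields the $\liminf$ bound.

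Your $\limsup$ step also fails as stated: non-unique ergodicity of the orbit closure of $x_\mu$ supplies an additional \emph{invariant} measure, but nothing guarantees it is generic (or nonatomic), and your counting scheme needs a generic point for each measure, so you cannot simply rerun the argument with $d+1$ measures. The paper instead uses non-unique ergodicity combinatorially: for such a generic point $x_i$ there are windows $u_i(\ell_N,L)\notin\mathcal{W}_i(N)$ arbitrarily far out, and the $N-\ell_N$ length-$N$ words straddling the first such window have leftmost $\ell_N$-subword in $\mathcal{W}_i(N)$ yet lie in no $\mathcal{S}_j$ or $\mathcal{T}_j$, giving roughly an extra $(1-\delta)N$ words on top of the previous count; moreover this is carried out only along the infinitely many $N$ for which the initial length-$N$ windows are all distinct, which requires the separate minimal-period argument at the end of the paper's proof and is absent from your sketch.
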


\begin{proof} 
We show that for arbitrarily small $\delta>0$, we have 
\begin{equation}\label{eq:delta1} 
\liminf_{n\to\infty}\frac{P_X(n)}{n}>d-2d\delta 
\end{equation}
and, under the additional hypothesis of a generic measure and associated generic 
point whose orbit closure is not uniquely ergodic, 
\begin{equation}\label{eq:delta2} 
\limsup_{n\to\infty}\frac{P_X(n)}{n}>d+1-2d\delta. 
\end{equation} 
The theorem follows immediately from these estimates.  

Fix $\delta>0$, and for convenience assume that $1/\delta\in\N$. 
Suppose $\mu_1,\dots,\mu_d$ are distinct, nonatomic, generic measures for $(X,\sigma)$ and choose 
$x_1,\dots,x_d\in X$ such that for each $1\leq i\leq d$, $x_i$ is generic for $\mu_i$.  Since $\mu_i$ is nonatomic, $x_i$ is not eventually periodic.  By definition of $x_i$, for all $w\in\mathcal{L}(X)$ we have 
\begin{equation}\label{eq:limit} 
\lim_{N\to\infty}\frac{1}{N}\sum_{k=0}^{N-1}1_{[w]}(T^kx_i)=\mu_i([w]). 
\end{equation} 
For $1\leq j_1<j_2\leq d$, choose words $w_{(j_1,j_2)}\in\mathcal{L}(X)$ such that $\mu_{j_1}([w_{(j_1,j_2)}])\neq\mu_{j_2}([w_{(j_1,j_2)}])$.  Set 
\begin{equation}\label{eq:epsilon} 
\varepsilon:=\min\{|\mu_{j_1}([w_{(j_1,j_2)}])-\mu_{j_2}([w_{(j_1,j_2)}])|\colon1\leq j_1<j_2\leq d\} 
\end{equation} 
and set 
\begin{equation}\label{eq:B} 
B:=\frac{\delta}{16-4\delta}. 
\end{equation} 
By~\eqref{eq:limit}, for each $1\leq i\leq d$ there exists $N_i\in\N$ such that for all $N\geq N_i$ and all $1\leq j_1<j_2\leq d$, we have 
\begin{equation}\label{eq:distance} 
\left|\frac{1}{N}\sum_{k=0}^{N-1}1_{[w_{(j_1,j_2)}]}(T^kx_i)-\mu_i([w_{(j_1,j_2)}])\right|<B\cdot\varepsilon. 
\end{equation} 
Set 
\begin{equation}\label{eq:M} 
M:=\max_{1\leq i\leq d}N_i. 
\end{equation} 

For $1\leq i\leq d$ and for $N,m\in\N$, define $u_i(N,m)\in\mathcal{L}_N(X)$ by 
$$
u_i(N,m):=(x_i(m),x_i(m+1),x_i(m+2),\dots,x_i(m+N-1)) 
$$
to be the word of length $N$ that occurs in $x_i$ starting at location $m$. 

If $u,w\in\mathcal{L}(X)$ and $|u|\geq|w|$, define the frequency with which $w$ occurs as a subword in $u$ to be
\begin{equation}\label{eq:frequency} 
F(u,w):=\frac{1}{|u|-|w|+1}\sum_{k=0}^{|u|-|w|}1_{[w]}(T^kx), 
\end{equation} 
where $x\in[u]$.  Note that this frequency does not depend on the choice of $x\in[u]$, as it only depends on the first $|u|$ coordinates of $x$.  Suppose 
$$ 
N\geq\frac{1}{\delta}\cdot(M+\max\{|w_{(j_1,j_2)}|\colon1\leq j_1<j_2\leq d\}) 
$$ 
is fixed and define $L_N:=\lfloor(2-\delta)N\rfloor$ and $\ell_N:=\lfloor\delta N\rfloor$.  
By definition, $\ell_N-|w_{(j_1,j_2)}|\geq M$ for all $w_{(j_1,j_2)}$.  If $1\leq i\leq d$, $1\leq j_1<j_2\leq d$, and $M\leq L\leq L_N$, then the frequency with which the word $w_{(j_1,j_2)}$ occurs in the subword of $x_i$ with length $\ell_N$ and starting from location $L$ is  given by
	\begin{eqnarray*} 
	F\left(u_i(\ell_N,L),w_{(j_1,j_2)}\right) \\ 
	&&\hspace{-1.5 in}=\frac{1}{\ell_N-|w_{(j_1,j_2)}|+1}\sum_{k=0}^{\ell_N-|w_{(j_1,j_2)}|}1_{[w_{(j_1,j_2)}]}(T^k(T^Lx_i)) \\ 
	&&\hspace{-1.5 in}=\frac{1}{\ell_N-|w_{(j_1,j_2)}|+1}\sum_{k=L}^{L+\ell_N-|w_{(j_1,j_2)}|}1_{[w_{(j_1,j_2)}]}(T^kx_i) \\ 
	&&\hspace{-1.5 in}=\frac{1}{\ell_N-|w_{(j_1,j_2)}|+1}\left(\sum_{k=0}^{L+\ell_N-|w_{(j_1,j_2)}|}1_{[w_{(j_1,j_2)}]}(T^kx_i)-\sum_{k=0}^{L-1}1_{[w_{(j_1,j_2)}]}(T^kx_i)\right) \\
	&&\hspace{-1.5 in}=\frac{L+\ell_N-|w_{(j_1,j_2)}|+1}{\ell_N-|w_{(j_1,j_2)}|+1}\cdot\frac{1}{L+\ell_N-|w_{(j_1,j_2)}|+1}\sum_{k=0}^{L+\ell_N-|w_{(j_1,j_2)}|}1_{[w_{(j_1,j_2)}]}(T^kx_i) \\ 
	&&\hspace{-1 in}-\frac{L}{\ell_N-|w_{(j_1,j_2)}|+1}\cdot\frac{1}{L}\sum_{k=0}^{L-1}1_{[w_{(j_1,j_2)}]}(T^kx_i). 
	\end{eqnarray*} 
But by~\eqref{eq:distance}, 
$$
\left|\frac{1}{L+\ell_N-|w_{(j_1,j_2)}|+1}\sum_{k=0}^{L+\ell_N-|w_{(j_1,j_2)}|}1_{[w_{(j_1,j_2)}]}(T^kx_i)-\mu_i([w_{(j_1,j_2)}])\right|<B\cdot\varepsilon 
$$
and since $L\geq M$,  
we have 
$$
\left|\frac{1}{L}\sum_{k=0}^{L-1}1_{[w_{(j_1,j_2)}]}(T^kx_i)-\mu_i([w_{(j_1,j_2)}])\right|<B\cdot\varepsilon. 
$$
Therefore 
	\begin{eqnarray*} 
	\left|F\left(u_i(L,\ell_N),w_{(j_1,j_2)}\right)-\mu_i([w_{(j_1,j_2)}])\right|&\hspace{3 in}& \\ 
	&&\hspace{-4.5 in}\leq\frac{L+\ell_N-|w_{(j_1,j_2)}|+1}{\ell_N-|w_{(j_1,j_2)}|+1}\cdot B\cdot\varepsilon+\frac{L}{\ell_N-|w_{(j_1,j_2)}|+1}\cdot B\cdot\varepsilon \\ 
	&&\hspace{-4.5 in}=\frac{2L+\ell_N-|w_{(j_1,j_2)}|+1}{\ell_N-|w_{(j_1,j_2)}|+1}\cdot B\cdot\varepsilon \\ 
	&&\hspace{-4.5 in}\leq\frac{2\lfloor(2-\delta)N\rfloor+\lfloor\delta N\rfloor-|w_{(j_1,j_2)}|+1}{\lfloor\delta N\rfloor-|w_{(j_1,j_2)}|+1}\cdot B\cdot\varepsilon. 
	\end{eqnarray*} 
By Definition~\eqref{eq:B}  that $B=\frac{\delta}{16-4\delta}$, for all sufficiently large $N$ this inequality implies 
\begin{equation}\label{eq:computation} 
\left|F\left(u_i(L,\ell_N),w_{(j_1,j_2)}\right)-\mu_i([w_{(j_1,j_2)}])\right|<\frac{\varepsilon}{2}. 
\end{equation} 
By~\eqref{eq:epsilon}, for all sufficiently large $N$ and all $L_1,L_2\in\{M,M+1,\dots,\lfloor(2-\delta)N\rfloor\}$ we have that if $1\leq i_1<i_2\leq d$, 
then the frequency with which $w_{(i_1,i_2)}$ occurs in $u_{i_1}(L_1,\ell_N)$ is different than its frequency in $u_{i_2}(L_2,\ell_N)$.  Therefore $u_{i_1}(L_1,\ell_N)\neq u_{i_2}(L_2,\ell_N)$.  For $1\leq i\leq d$ define 
$$
\mathcal{W}_i(N):=\left\{u_i(L,\ell_N)\colon M\leq L\leq\lfloor(2-\delta)N\rfloor\right\}\subseteq\mathcal{L}_{\ell_N}(X). 
$$ 
We have shown that for all sufficiently large $N$, 
if $1\leq i_1<i_2\leq d$, then
\begin{equation}\label{eq:words} 
\mathcal{W}_{i_1}(N)\cap\mathcal{W}_{i_2}(N)=\emptyset. 
\end{equation} 
Fix $i$ with $1\leq i\leq d$ and fix $N$ sufficiently large such that~\eqref{eq:words} holds.  If the words 
$$
u_i(N,M), u_i(N,M+1), u_i(N,M+2),\dots,u_i(N,\lfloor(1-\delta)N\rfloor)  
$$ 
are all distinct, then the set 
\begin{equation}
\label{eq:defSi} 
\mathcal{S}_i:=\{w\in\mathcal{L}_N(X)\colon\text{every subword of $w$ of length $\ell_N$ is an element of $\mathcal{W}_i(N)$}\} 
\end{equation}
contains at least $\lfloor(1-\delta)N\rfloor-M$ elements.  If, on the other hand, the words 
$$ 
u_i(N,M), u_i(N,M+1), u_i(N,M+2),\dots,u_i(N,\lfloor(1-\delta)N\rfloor) 
$$ 
are not all distinct, then there exist $M\leq L_1<L_2\leq \lfloor(1-\delta)N\rfloor$ such that $u_i(N,L_1)=u_i(N,L_2)$.  In this case, by Theorem~\ref{theorem:qz-revised} there exists $K\in\N$ such that 
	\begin{enumerate} 
	\item {\em (Distinct Words Condition)}: for all $K\leq k_1<k_2\leq K+N-\ell_N$ we have $u_i(N,k_1)\neq u_i(N,k_2)$; 
	\item {\em (Prefix First Occurrence Condition)}: for all $K\leq k\leq K+N-\ell_N$ there exists $\ell_N\leq l_k\leq N$ such that 
	$u_i(k,\ell_N)=u_i(l_k,\ell_N)$. 
	\end{enumerate} 
Thus in this case, the set 
\begin{equation}
\label{defTi}
\mathcal{T}_i:=\{w\in\mathcal{L}_N(X)\colon\text{the leftmost subword $w$ of length $\ell_N$ is an element of $\mathcal{W}_i(N)$}\} 
\end{equation}
contains at least $N-\ell_N$ elements. 

By~\eqref{eq:words}, $\mathcal{S}_{i_1}\cap\mathcal{S}_{i_2}=\emptyset$ whenever $i_1\neq i_2$ (and both sets are defined).  A similar statement holds when comparing any $\mathcal{S}_{i_1}$ to $\mathcal{T}_{i_2}$ for any $i_2$, or when comparing $\mathcal{T}_{i_1}$ to $\mathcal{T}_{i_2}$.  
Thus for each $1\leq i\leq d$, we have associated either the set $\mathcal{S}_i$ or the set $\mathcal{T}_i$ and 
$$ 
P_X(N)\geq d\cdot\min\{N-\ell_N, L_N-M\}=d\cdot\min\{N-\lfloor\delta N\rfloor, \lfloor(1-\delta)N\rfloor-M\}. 
$$ 
Therefore, 
$$ 
\frac{P_X(N)}{N}\geq\frac{d\cdot\min\{N-\lfloor\delta N\rfloor, \lfloor(1-\delta)N\rfloor-M\}}{N}, 
$$ 
which is larger than $d-2d\delta$ for all sufficiently large $N$, thus establishing~\eqref{eq:delta1}. 

To prove~\eqref{eq:delta2} , suppose that there exists $1\leq i\leq d$ such that the orbit closure of $x_i$ is not uniquely ergodic.  Then for any fixed $N\in\N$, there exist infinitely many $L\in\N$ such that $u_i(\ell_N,L)\notin\mathcal{W}_i(N)$.  Fix $N\in\N$.  

If the words 
$$ 
u_i(N,M), u_i(N,M+1), u_i(N,M+2),\dots,u_i(N,\lfloor(1-\delta)N\rfloor) 
$$ 
are all distinct, then we define $\mathcal{S}_i$ as in~\eqref{eq:defSi}.  
In this case, choose the smallest $L\geq M$ for which $u_i(\ell_N,L)\notin\mathcal{W}_i$; clearly $L>L_N$.  Then each of the words 
$$ 
u_i(N,L-N+\ell_N), u_i(N,L-N+\ell_N+1), \dots , u_i(N,L-\ell_N) 
$$ 
has the property that its leftmost subword of length $\ell_N$ is an element of $\mathcal{W}_i(N)$,  these words are pairwise distinct (in $u_i(N,L-N+\ell_N+j)$, 
and the leftmost occurrence of a subword of length $\ell_N$ that is not in $\mathcal{W}_i(N)$ begins at location $L-\ell_N-j$).  These $N-\ell_N$ words of length $N$ do not lie in $\mathcal{S}_i$, 
and are not  contained in any $\mathcal{S}_j$ or $\mathcal{T}_j$ for any $j\neq i$ (as defined in~\eqref{defTi}),  since their leftmost subword of length $\ell_N$ is in $\mathcal{W}_i$.  Therefore 
$$ 
P_X(N)\geq d\cdot\min\{N-\ell_N, L_N-M\}=d\cdot\min\{N-\lfloor\delta N\rfloor, \lfloor(1-\delta)N\rfloor-M\}+(N-\ell_N) 
$$ 
and so in this case, 
$$
\frac{P_X(N)}{N}\geq\frac{d\cdot\min\{N-\lfloor\delta N\rfloor, \lfloor(1-\delta)N\rfloor-M\}}{N}+\frac{N-\lfloor\delta N\rfloor}{N}.  
$$ 
If $N$ is sufficiently large, this is larger than $d+1-2d\delta$.  

Thus we are left with showing that there are infinitely many $N\in\N$ for which the words 
\begin{equation}
\label{eq:words2} 
u_i(N,M), u_i(N,M+1), u_i(N,M+2),\dots,u_i(N,\lfloor(1-\delta)N\rfloor) 
\end{equation}
are all distinct.  Fix some $N\in\N$ and assume that these words are not all distinct.  As before, let $L_1, L_2\in\{M,M+1,\dots,\lfloor(1-\delta)N\rfloor\}$ be distinct integers such that $u_i(N,L_1)=u_i(N,L_2)$.  Let $p$ be the minimal period of the word $u_i(N+L_2-L_1,L_1)$ and let $K$ be the largest integer for which $u_i(K,L_1)$ is periodic with period $p$ (note that $K$ is finite since $x_i$ is not eventually periodic).  Then the words 
$$ 
u_i(K,M), u_i(K,M+1), \dots, u_i(K,\lfloor(1-\delta)K\rfloor) 
$$ 
are all distinct: if $j>L_1-M$ then the word $u_i(K,M+j)$ begins with a word that is periodic of period $p$ and has length exactly $K-L_1-j$ (so no two words of this form can coincide), and if $j\leq L_1-M$ then $u_i(K,M+j)$ either begins with a word of length $K-L_1+j$ that is periodic of period $p$, or has a prefix of length at most $L_1$ followed by a word of length at least $K-L_1>N$ that is periodic of period $p$ (which occurs in a different location for each such $j$).  Therefore, for each $N\in\N$ there exists $K\geq N$ such that the words 
$$ 
u_i(K,M), u_i(K,M+1), \dots, u_i(K,\lfloor(1-\delta)K\rfloor) 
$$ 
are all distinct, and in particular there are infinitely many $N$ such that the words in~\eqref{eq:words2} are distinct.  This establishes~\eqref{eq:delta2}. 
\end{proof}

As  immediate corollaries of Theorem~\ref{theorem:main}, we have the theorems stated in the introduction: 
\begin{corollary*}[Theorem~\ref{th:upper}]
If $(X,\sigma)$ is a subshift and there exists $k\in\N$ such that 
$$
\liminf_{n\to\infty}\frac{P_X(n)}{n}<k, 
$$
then $(X,\sigma)$ has at most $k-1$ distinct, nonatomic, generic measures. 
\end{corollary*}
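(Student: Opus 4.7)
The plan is to derive this corollary from Theorem~\ref{theorem:main} by a simple contrapositive argument. Suppose, toward a contradiction, that $(X,\sigma)$ has at least $k$ distinct, nonatomic, generic measures. Selecting any $k$ of them (so that the hypothesis of Theorem~\ref{theorem:main} is satisfied with $d=k$) and invoking the first conclusion of that theorem yields
$$
\liminf_{n\to\infty}\frac{P_X(n)}{n}\;\geq\;k,
$$
which directly contradicts the standing hypothesis $\liminf_{n\to\infty}P_X(n)/n<k$. Hence the number of such measures is at most $k-1$, as claimed.

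There is no real obstacle here, because all the substantive work has been absorbed into Theorem~\ref{theorem:main}; the only thing to verify is that the specialization $d=k$ is legitimate, which it is since any finite subfamily of distinct nonatomic generic measures is again a family of distinct nonatomic generic measures. I would simply state the corollary as an immediate consequence and write the one-line contrapositive above as the proof.
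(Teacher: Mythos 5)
Your proposal is correct and matches the paper exactly: the paper also obtains Theorem~\ref{th:upper} as an immediate consequence of the first conclusion of Theorem~\ref{theorem:main}, applied with $d=k$ to a hypothetical family of $k$ distinct, nonatomic, generic measures, contradicting the assumption $\liminf_{n\to\infty}P_X(n)/n<k$.
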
 

\begin{corollary*}[Theorem~\ref{th:upper-sup}] 
If $(X,\sigma)$ is a subshift and there exists $k\in\N$ such that 
$$
\limsup_{n\to\infty}\frac{P_X(n)}{n}<k, 
$$ 
and if $(X,\sigma)$ has a generic measure $\mu$ and a generic point $x_{\mu}$ whose orbit closure is not uniquely ergodic, then $(X,\sigma)$ has at most $k-2$ distinct, nonatomic, generic measures. 
\end{corollary*}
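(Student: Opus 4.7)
The plan is a one-line contrapositive of the second half of Theorem~\ref{theorem:main}, exactly paralleling how Theorem~\ref{th:upper} follows from the $\liminf$ half. Suppose, for contradiction, that $(X,\sigma)$ has at least $k-1$ distinct nonatomic generic measures. By hypothesis, $(X,\sigma)$ also has a generic measure $\mu$ together with a generic point $x_\mu$ whose orbit closure is not uniquely ergodic, so the additional data required for the $\limsup$ conclusion of Theorem~\ref{theorem:main} are in place. Applying that conclusion with $d := k-1$ yields
$$
\limsup_{n\to\infty} \frac{P_X(n)}{n} \;\geq\; d+1 \;=\; k,
$$
contradicting the assumption $\limsup_{n\to\infty} P_X(n)/n < k$. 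Hence $(X,\sigma)$ carries at most $k-2$ distinct nonatomic generic measures.

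Two brief remarks are in order. First, $d$ and $k$ are integers, so $\limsup \geq d+1$ combined with the strict bound $\limsup < k$ automatically forces $d+1 \leq k-1$, i.e., $d \leq k-2$; no further handling of the strict inequality is required. Second, the $\liminf$ half of Theorem~\ref{theorem:main} by itself yields only the weaker bound of $k-1$ measures (this is exactly Theorem~\ref{th:upper}), and the extra hypothesis on the orbit closure failing to be uniquely ergodic is precisely what promotes that bound to $k-2$ via the $\limsup$ half.

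There is no substantive obstacle at this stage. All of the combinatorial heavy lifting---selecting cylinders that distinguish the $\mu_i$ pairwise, converting orbit-average convergence into uniform control of empirical frequencies on windows of length $\ell_N$, the case split via Theorem~\ref{theorem:qz-revised} depending on whether certain length-$N$ subwords of $x_i$ repeat, and in the $\limsup$ case the construction of an extra packet of $N-\ell_N$ distinct length-$N$ words coming from the failure of unique ergodicity---has already been absorbed into the proof of Theorem~\ref{theorem:main}, so the corollary genuinely reduces to the one-line contrapositive above.
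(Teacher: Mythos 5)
Your proposal is correct and matches the paper, which derives Theorem~\ref{th:upper-sup} as an immediate corollary of Theorem~\ref{theorem:main} in exactly this contrapositive fashion (take $d=k-1$, invoke the $\limsup$ half using the hypothesized non-uniquely-ergodic orbit closure, and contradict $\limsup_{n\to\infty}P_X(n)/n<k$). Your remark that integrality of $d$ and $k$ makes the strict inequality do the rest is also the right observation; nothing further is needed.
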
 

In Section~\ref{section:sharp}, we show that both of these corollaries are sharp. 
In particular, the linear growth rate in Theorem~\ref{th:upper} is optimal, in the sense that 
a superlinear growth rate does not suffice for showing that the set of ergodic measures is finite, 
and the technical condition of Theorem~\ref{th:upper-sup} (and in Theorem~\ref{theorem:main}) on the existence of a a point whose orbit closure is not uniquely ergodic can not be dropped.

\section{The natural coding of an IET}
\label{sec:IET}
Let $k\geq 1$ be an integer and $\pi$ be a permutation of $\{1, \ldots, k\}$.  Let $I=[0, \lambda]$ be an interval
and choose $0 = \lambda_0 < \lambda_1 < \ldots < \lambda_k = \lambda$.
The {\em interval exchange transformation} $T\colon [0,\lambda]\to [0,\lambda]$ is defined to be the map that is an
isometry on each subinterval $[\lambda_{i-1}, \lambda_i)$ for $i=1, \ldots, k$ and rearranges the order of these subintervals according to the permutation $\pi$.  We refer to this interval exchange transformation as a {\em $k$-IET} or just an {\em IET} when $k$ is clear from the context.  

Given an interval exchange transformation, there is a natural coding by an associated dynamical system.  For $x\in I$,
define ${\bf x } = (x_n)\in\{1, \ldots, k\}^\N$  by setting
$$
x_n = i \text{ if and only if } T^ix\in [\lambda_{i-1}, \lambda_i).
$$
The {\em language} of $\bf x$ is the set of all finite words that appear and the {\em natural coding} of the interval
exchange transformation is the symbolic system, endowed with the shift, that has the same language as $\bf x$.
The {\em natural symbolic cover} of an interval exchange transformation is the subshift that codes every $x\in I$,
meaning it is the symbolic system, endowed with the shift, whose language consists of all finite words that arise in the
orbit of any $x\in I$. 

If $T$ is a minimal interval exchange transformation, then any $x\in I$ gives rise to the same language and it suffices
to take the orbit of a single point.  More generally, the symbolic coding is not topologically conjugate to $T$, 
as up to countably many points may have multiple preimages (though a point can only have finitely many preimages).

We claim that a generic measure for an interval exchange transformation lifts to a generic measure
in the symbolic cover.
An open set in the symbolic cover is a cylinder set and thus corresponds to an interval or a finite
finite union of intervals  in $[0,\lambda]$.
Thus it suffices to check the claim for a finite interval $J\subseteq [0,\lambda]$.
Let $x\in[0,\lambda]$ be a generic point for the measure $\mu$.
Choose continuous functions $f$ and $g$ on $[0,\lambda]$ such that
$ 0\leq f\leq \one_J\leq g $
and
$\int g \,d\mu-\varepsilon/2 \leq \mu(J) \leq \int f \,d\mu+\varepsilon/2.
$
Then
$$
\left|\frac{1}{N}\sum_{n=1}^{N}f(T^nx) - \int f d\mu\right|<\varepsilon/2
$$
and the same holds for $g$.  Thus
$$\frac{1}{N}\sum_{n=0}^{N-1}\one_J(T^nx)
\leq  \frac{1}{N}\sum_{n=0}^{N-1}g(T^nx)
\leq \varepsilon/2 +  \int g \,d\mu
\leq \varepsilon + \int f \,d\mu
\leq \varepsilon + \frac{1}{N}\sum_{n=0}^{N-1}\one_J(T^nx).
$$
Thus the difference
$$\left|\mu(J) - \frac{1}{N}\sum_{n=0}^{N-1}\one_J(T^nx)\right| <\varepsilon.
$$
Since this holds for all $\varepsilon > 0$, for any open set $J\subset [0,\lambda]$, we have 
$$
\lim_{N\to\infty}\frac{1}{N}\sum_{n=0}^{N-1}\one_J(T^nx) = \mu(J).
$$

Write $\phi\colon (X,\sigma)\to ([0,\lambda], T)$ for the factor map from the symbolic coding $(X,\sigma)$ to 
the interval exchange $([0, \Lambda], T)$.  Let $\mathcal L(X)$ denote the language of the coding 
and let $\mu$ be a generic 
measure on $([0,\lambda], T)$ with generic point $x$.  Let $x^*\in\phi^{-1}(x)$.  Then for any 
word $w\in\mathcal L(X)$, 
$$
\lim_{N\to\infty}\frac{1}{N}
\sum_{n=0}^{N-1}\one_{[w]}(\sigma^nx^*) = 
\lim_{N\to\infty}\frac{1}{N}
\sum_{n=0}^{N-1}\one_{\phi([w])}(T^nx) = \mu(\phi([w])), 
$$
since $\phi([w])$ is a finite union of intervals.  Since $\mu$ is a nonatomic, generic measure, 
the pullback  $\phi^*(\mu([w])) = \phi^*(\mu(\phi^{-1}(\phi([w]))))$ is also nonatomic, as 
only countably many points in $([0, \lambda], T)$ have multiple pre-images and each of these only has 
finitely many preimages.  (In other words, the pushforward of the pullback of the measure is the measure itself.)  
Thus  a generic measure for the interval exchange transformation 
corresponds to a generic measure in the symbolic coding.

It is well known that an IET has linear complexity (see for example~\cite{FZ}).  We include a proof for completeness:  
\begin{proposition}
The natural coding of a minimal $k$-IET has complexity 
$$P(n) \leq (k-1)n+1.$$
If the $k$-IET satisfies the infinite distinct orbits condition (IDOC), then the complexity is exactly $P(n) = (k-1)n+1$.  
\end{proposition}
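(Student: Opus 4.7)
The plan is to identify the words of length $n$ in the natural coding with the nonempty cells of a natural refinement of the defining partition. Let $\mathcal{P}=\{J_1,\dots,J_k\}$ with $J_i=[\lambda_{i-1},\lambda_i)$, and set
\[
\mathcal{P}_n:=\bigvee_{j=0}^{n-1}T^{-j}\mathcal{P}.
\]
Two points of $[0,\lambda]$ produce the same length-$n$ coding if and only if they lie in a common cell of $\mathcal{P}_n$, and by minimality of $T$ every nonempty cell produces a word that actually appears in the natural coding. Hence $P(n)$ equals the number of nonempty cells of $\mathcal{P}_n$.

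To obtain the upper bound, I would count the partition points. Each cell of $\mathcal{P}_n$ is an interval whose interior endpoints lie in the set
\[
D_n:=\{T^{-j}(\lambda_i):0\leq j\leq n-1,\ 1\leq i\leq k-1\}.
\]
Since $|D_n|\leq(k-1)n$, the set $[0,\lambda]$ is cut by these points into at most $(k-1)n+1$ subintervals, yielding $P(n)\leq(k-1)n+1$.

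For the equality statement, IDOC asserts that the orbits of the discontinuities $\lambda_1,\dots,\lambda_{k-1}$ are infinite and pairwise disjoint. Under IDOC, the $(k-1)n$ points of $D_n$ are pairwise distinct, so $\mathcal{P}_n$ has exactly $(k-1)n+1$ cells, and each is a nonempty interval corresponding to a distinct word in $\mathcal{L}(X)$. Thus $P(n)=(k-1)n+1$.

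The argument is essentially bookkeeping, so no single step is truly hard, but some care is needed with boundary conventions (the use of half-open intervals, and whether $0$ or $\lambda$ can coincide with some $T^{-j}(\lambda_i)$ in a way that affects the count). One must also verify that minimality is genuinely what is needed to guarantee that every nonempty cell of $\mathcal{P}_n$ corresponds to a word in the language, which follows from the density of every orbit: any nonempty cell is an interval of positive length, so some orbit enters it and contributes the corresponding word to $\mathcal{L}(X)$.
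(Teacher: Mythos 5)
Your proof is correct and takes essentially the same route as the paper: both identify the length-$n$ words with the interval cells of the coding partition, whose interior endpoints lie among the at most $(k-1)n$ points $T^{-j}(\lambda_i)$, the paper merely organizing this count inductively (at most $k-1$ new continuations when passing from length $n$ to $n+1$) instead of counting $D_n$ all at once. The two facts you assert rather than verify --- that the cells of $\mathcal{P}_n$ are genuinely intervals (proved by induction, using that $T$ is a translation on each cell), and that under IDOC each point of $D_n$ is an actual discontinuity of the length-$n$ itinerary so that adjacent cells carry distinct words --- are exactly the steps the paper also leaves implicit, so your argument matches the paper's proof in both substance and level of detail.
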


\begin{proof}
We proceed by induction on n.  For n=1, this is the alphabet $k$ and the result is clear.  
Assume that $P(n) \leq (k-1)n+1$.  Fixing a particular word of length $n$, 
the cylinder set defined by this word distinguishes an interval in the exchange, 
and by considering the cylinder sets associated to each word of length $n$, we obtain 
a partition of the exchange.  
Thus we have associated a partition $\CI$ of the exchange to the $(k-1)n+1$ words of length $n$, and this 
partition has $(k-1)n+2$ endpoints.  Furthermore, 
these endpoints all arise as iterates of the endpoints of the original $k+1$ 
endpoints of the interval exchange.  
Each of the $k+1$ original endpoints lies in some $T(I)$, where $T$ 
is the exchange map and $I$ is one of the intervals in the partition $\CI$. 
We note that if the exchange satisfies the IDOC condition, 
then the endpoints arise as distinct iterates, each of the original endpoints lies in the interior of 
some $T(I)$, 
but without this condition there may be overlap in the iterates and this is only an upper bound.

Thus we have $M \leq k-1$ intervals in $(T(I))_{I\in\CI}$ which cover all of the original endpoints.  
These $M$ intervals may each cover more than one of the original endpoints, say $m$ of them, and 
there are at most $m+1$ distinct ways to continue the orbit of a word of length $n$.  
Thus in total, we have $(k-1)n+1 - M + (k-1)+M$ continuations, which is exactly the bound $P(n+1) \leq kn+1$.  

If the exchange satisfies the IDOC condition, then as the endpoints arise as distinct iterates, we have 
that the complexity is exactly $P(n) = (k-1)n+1$.  
\end{proof}

Combining this with Theorem~\ref{theorem:main}, we have the statement of Theorem~\ref{thm:1.1}:
\begin{corollary*}[Theorem~\ref{thm:1.1}]
For $k>2$, a minimal $k$-IET has at most $k-2$ generic measures. 
\end{corollary*}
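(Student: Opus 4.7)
The plan is to combine the complexity bound $P_X(n) \leq (k-1)n+1$ just proved with the sharper (limsup) conclusion of Theorem~\ref{theorem:main}, deriving a contradiction if the IET has $k-1$ or more generic measures.

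Proceeding by contradiction, I would suppose a minimal $k$-IET admits at least $d := k-1 \geq 2$ distinct generic measures $\mu_1,\dots,\mu_d$. By the lifting discussion immediately preceding this corollary, each $\mu_i$ corresponds to a distinct generic measure on the natural symbolic cover $(X,\sigma)$. Since a minimal $k$-IET with $k > 2$ has no periodic orbits, every invariant measure on the IET is nonatomic, and nonatomicity passes to the symbolic cover because the factor map $\phi$ is at most countable-to-one and in fact finite-to-one (so single points in $X$ have image of $\mu$-measure zero). Thus $(X,\sigma)$ inherits at least $d$ distinct nonatomic generic measures.

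Next I would verify the orbit-closure hypothesis of Theorem~\ref{theorem:main}. A minimal IET pulls back to a minimal subshift, so the $\sigma$-orbit closure of any point in $X$ is all of $X$. Since $(X,\sigma)$ carries $d \geq 2$ distinct invariant measures, it is not uniquely ergodic, and therefore any generic point $x_{\mu_1}$ automatically has an orbit closure (namely $X$) that is not uniquely ergodic. Applying the second assertion of Theorem~\ref{theorem:main} yields
$$
\limsup_{n \to \infty} \frac{P_X(n)}{n} \geq d+1 = k,
$$
which directly contradicts the bound $\limsup_{n\to\infty} P_X(n)/n \leq k-1$ coming from $P_X(n) \leq (k-1)n+1$. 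Hence no minimal $k$-IET can possess as many as $k-1$ generic measures.

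The main obstacle, and it is a mild one, is confirming that the hypotheses of Theorem~\ref{theorem:main} really transfer through the lift to the symbolic cover: the distinctness and nonatomicity of the lifted measures, and the existence of a generic point whose orbit closure is not uniquely ergodic. Both points are handled by the minimality of the IET (and hence of its symbolic cover), together with the finite-to-one structure of the factor map already laid out in the discussion preceding the corollary. Once these facts are in hand, the contradiction is immediate from the linear complexity estimate.
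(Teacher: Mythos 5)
Your argument is correct and follows the paper's own route: lift the generic measures of the IET to distinct nonatomic generic measures on the (minimal) symbolic coding, use minimality plus non-unique ergodicity to verify the orbit-closure hypothesis, and apply the $\limsup$ part of Theorem~\ref{theorem:main} against the bound $P_X(n)\leq(k-1)n+1$ to force $d\leq k-2$. The paper leaves these details implicit ("combining this with Theorem~\ref{theorem:main}"), and your write-up simply makes that combination explicit.
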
 

\section{Sharpness}\label{section:sharp}
In this section show that the bound in Theorem~\ref{theorem:main} is sharp.  We recall the statement of Theorem~\ref{th:sharp} for convenience. 

\begin{theorem*}[Theorem~\ref{th:sharp}]
Let $d>1$ be fixed.  There exists a minimal subshift $(X,\sigma)$ such that
\begin{eqnarray*}
\liminf_{n\to\infty}\frac{P_X(n)}{n}&=&d, \\
\limsup_{n\to\infty}\frac{P_X(n)}{n}&=&d+1, 
\end{eqnarray*}
and $X$ has exactly $d$ ergodic measures.
\end{theorem*}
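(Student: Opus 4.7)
The plan is to build $(X,\sigma)$ as an S-adic subshift on the alphabet $\mathcal{A}=\{1,\ldots,d\}$, governed by a one-parameter family of substitutions with a single integer $N_k$ at each level that grows very rapidly. Set $B_1^{(i)}:=i$, so all $d$ level-$1$ blocks have length $L_1=1$. Inductively, supposing length-$L_k$ blocks $B_k^{(1)},\ldots,B_k^{(d)}$ have been built, define
$$
B_{k+1}^{(i)}:=\bigl(B_k^{(i)}\bigr)^{N_k}\,B_k^{(i+1)}\,B_k^{(i+2)}\cdots B_k^{(i+d-1)}
$$
with indices read modulo $d$, so that all $d$ level-$(k+1)$ blocks have common length $L_{k+1}=(N_k+d-1)L_k$. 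Let $X$ be the orbit closure in $\mathcal{A}^{\mathbb{Z}}$ of any accumulation point of the family of bi-infinite words obtained by infinitely extending $B_k^{(1)}$; the bounded-gap property below shows that the choice of accumulation point does not matter.

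Step one, \emph{minimality}: since each level-$k$ template appears as a factor of every level-$(k+1)$ template (the cyclic tail of $B_{k+1}^{(j)}$ contains every $B_k^{(i)}$), by iteration every word of $\mathcal{L}(X)$ of length at most $L_{k_0}$ occurs in every $x\in X$ with gaps uniformly bounded by $L_{k_0+1}$, yielding minimality by a standard argument. Step two, \emph{counting measures}: the S-adic transition matrix from level $k$ to level $k+1$ is $A_k=(N_k-1)I+J_d$, where $J_d$ is the all-ones matrix; its normalized columns converge to the standard basis vectors as $N_k\to\infty$, and a standard Bratteli--Vershik projective-limit computation yields exactly $d$ ergodic invariant probability measures $\mu_1,\ldots,\mu_d$, provided the $N_k$ grow fast enough that the infinite product $\prod_k (1-d/(N_k+d-1))$ converges. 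To rule out nonergodic generic measures, I would use that every $x\in X$ admits a canonical parsing into level-$k$ templates at each scale $k$; a generic point for any measure $\mu$ then has well-defined long-run frequencies of level-$k$ templates for every $k$, and these frequency vectors must converge projectively to a single vertex of the simplex, forcing $\mu$ to coincide with one of the $\mu_i$.

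The main obstacle is step three, the \emph{precise complexity count}. For $n\in[L_k,L_{k+1}]$, a factor $w\in\mathcal{L}_n(X)$ is specified by its alignment relative to the level-$k$ tiling of the ambient point; writing $n=qL_k+r$ with $0\leq r<L_k$, the factor is determined by $r$ together with the sequence of (at most) $q+1$ consecutive level-$k$ template indices it spans. Because each level-$(k+1)$ template begins with a long run of a single template type, at scales $n\leq L_k$ any factor of $X$ crosses at most one level-$k$ boundary, producing $d$ alignment classes and so $P_X(n)\leq dn+o(n)$; on the other hand, for $n$ slightly exceeding $L_k$ a second interior boundary enters the window, contributing roughly $n-L_k$ additional factors and forcing $P_X(n)\geq (d+1)n-o(n)$ just below $L_{k+1}$. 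Taking $N_k$ sufficiently large (say $N_k\geq 2^kL_k$) drives all error terms to $o(n)$ along both subsequences, yielding $\liminf P_X(n)/n=d$ at $n=L_k$ and $\limsup P_X(n)/n=d+1$ at $n$ just below $L_{k+1}$. The delicate point is verifying that the $\limsup$ does \emph{not} exceed $d+1$: this requires showing that outside the transition zones near each $L_k$, the set of length-$n$ factors of $X$ is organized into at most $d+1$ arithmetic progressions of offsets within level-$k$ templates, a property that follows from the particular $(N_k,1,1,\ldots,1)$ multiplicity structure used in the construction.
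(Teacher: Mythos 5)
Your high-level strategy (a hierarchy of blocks with one dominant multiplicity $N_k$, producing $d$ limiting frequency types, plus a complexity analysis at the scales $L_k$) is in the spirit of the paper's construction, but the specific substitution you chose does not satisfy the complexity bounds you claim, and this is exactly the ``delicate point'' you flagged. In your scheme $B_{k+1}^{(i)}=(B_k^{(i)})^{N_k}B_k^{(i+1)}\cdots B_k^{(i+d-1)}$ \emph{begins} with a level-$k$ block of type $i$ and \emph{ends} with one of type $i-1$ (mod $d$), so the level-$k$ pair seen across a junction of two level-$(k+1)$ blocks depends on the types of both; since the adjacencies $(m,m)$ and $(m,m+1)$ occur at every level, a junction $(m,m')$ at level $k+1$ projects to $(m-1,m')$ at level $k$, and iterating up the hierarchy shows that \emph{all} $d^2$ ordered pairs of level-$k$ blocks occur adjacently in $X$. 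Each junction type then contributes its own family of roughly $n$ factors at bulk scales. Concretely, for $(d+1)L_k\le n\le (N_k-1)L_k$ the $d$ words $u_i:=$ (last $n$ letters of $B_{k+1}^{(i)}$) and the $d$ words $v_i:=$ (suffix of length $n$ of $(B_k^{(i)})^{\infty}$) are $2d$ pairwise distinct right-special words: $u_i$ can be followed by the first letter of $B_{k+1}^{(i)}$ or of $B_{k+1}^{(i+1)}$ (letters $i$ and $i+1$), $v_i$ by $i$ or $i+1$; the $u_i$ (resp.\ $v_i$) are distinguished from each other by their last letters and from one another by $L_k$-periodicity. Hence $P_X(n+1)-P_X(n)\ge 2d$ throughout this range (using all $d^2$ adjacencies the slope is in fact at least $d^2$), so $\limsup_{n} P_X(n)/n\ge 2d>d+1$ for every $d\ge2$; and a short computation across the window $[L_{k+1},(d+1)L_{k+1}]$, where the slope is still at least $d$, gives $\liminf_n P_X(n)/n\ge d+\tfrac{d}{d+1}>d$. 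So both complexity claims fail, and your assertion that a factor of length $n\le L_k$ crossing one level-$k$ boundary produces only ``$d$ alignment classes'' undercounts the boundary types.

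This is precisely what the paper's more elaborate construction is engineered to avoid: the words $w_i^j$ have staggered lengths chosen in a specific cyclic order so that, at the scale relevant to a given $n$, every higher-level word begins with the \emph{same} block and ends with the \emph{same} block; consequently all higher-level junctions project to a single transition type, only the cyclic adjacencies $(i,i)$ and $(i,i+1)$ survive, and the count stays between $dn+o(n)$ and $(d+1)n+o(n)$. To repair your approach you would need a comparable device forcing all level-$(k+1)$ blocks to share a common prefix block and a common suffix block while still keeping $d$ distinct frequency types --- a genuine redesign, not a choice of the parameters $N_k$. Two smaller points: your identification of exactly $d$ ergodic measures from the matrices $(N_k-1)I+J_d$ is plausible but only sketched (the paper instead gets the upper bound on the number of measures for free from Theorem~\ref{theorem:main} once $\liminf P_X(n)/n\le d$ is established, a shortcut you could reuse); and the claim that a generic point's level-$k$ frequency vectors ``must converge projectively to a single vertex'' is unjustified as stated, though it is not needed for the statement as quoted.
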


Before we delve into the details of the construction, we outline the basic ideas involved.  The ideas of this argument were partly inspired by a construction of a minimal and not uniquely ergodic subshift by Quas on mathoverflow~\cite{quas} (see also Denker, Grillenberger, and Sigmund~\cite{DGS}). 

Fixing $d> 1$ and the alphabet $\A = \{1,\ldots, d\}$, we inductively construct $d$ sequences of 
words $\{w_1^j\}_{j=1}^{\infty}$, $\{w_2^j\}_{j=1}^{\infty},\dots,\{w_d^j\}_{j=1}^{\infty}$ in $\CL(\A^\Z)$.  The procedure we use  constructs the words in these sequences in the following (somewhat unusual) order: $w_1^1, w_2^1, \ldots, w_d^1, w_1^2, w_2^2, \ldots, w_d^2, w_1^3, w_2^3,\ldots,w_d^3,\ldots$  That is, we first construct the first word in each of the sequences, then construct the second word in each of the sequences, and so on.  The words have the property that 
\begin{enumerate}
\item If $i_1, i_2\in\A$ and  $j_1<j_2$, then 
$w_{i_1}^{j_1}$ occurs as a subword of $w_{i_2}^{j_2}$ syndetically\footnote{Recall that a word occurs $v$ occurs syndetically in a word $w$ with gap $g$ if every subword of $w$ of length $g$ contains a copy of $v$ as a sub-subword.}, with gap size bounded by a constant that depends only on $j_1$; 
\item For any $i\in\A$ and $j\in\N$, the frequency with which the letter $i$ occurs in $w_i^j$ (as a percentage of the length of $w_i^j$) is greater than and absolute constant, greater than $1/2$.   
\end{enumerate}

By taking a limit along a subsequence of $\{w_1^j\}_{j=1}^{\infty}$, 
we  produce a semi-infinite word $w_1^{\infty}$ and taking its orbit closure under the shift $\sigma$ and the natural 
two sided extension, we obtain 
a closed subshift $X\subseteq\A^{\Z}$.  It  follows from the construction 
that $(X,\sigma)$ is minimal and that $w_i^j\in\mathcal{L}(X)$ for all $i\in\A$ and $j\in\N$.  
For fixed $i\in\A$, there are arbitrarily long words in $\mathcal{L}(X)$ for which the frequency 
of letter $i$ is greater than (a constant greater than) $1/2$ and so the system 
$(X,\sigma)$ has an ergodic measure assigning the cylinder set $[i]$ measure larger than $1/2$.  
Thus $(X,\sigma)$ has at least $|\A|=d$ ergodic measures.  By carefully choosing the lengths of 
the words, we further show that the system $(X,\sigma)$ satisfies the upper and lower bounds 
on the complexity as in the statement of the theorem.  
Applying Theorem~\ref{theorem:main}, it follows that $(X,\sigma)$ has at most $d$ ergodic measures, and so exactly 
$d$ ergodic measures. 

We now make these ideas precise: 
\begin{proof}[Proof of Theorem~\ref{th:sharp}]
Let $\A:=\{1,2,\dots,d\}$.  Choose $\kappa_1,\kappa_2,\dots$ to be a sequence of real numbers in $(0,1)$ such that  
$$
\prod_{j=1}^{\infty}\kappa_j>1/2
$$
choose $\delta_1,\delta_2,\dots$ to be a strictly decreasing sequence of real numbers in $(0,1)$ such that $\lim\delta_j=0$.  

\subsection*{Step 1} ({\em Construction of the sequences $\{w_1^j\}_{j=1}^{\infty},\dots,\{w_d^j\}_{j=1}^{\infty}$}): Define the word 
$$
w_1^1:=\underbrace{11\cdots1}_{\text{length }N_{(1,1)}^{[1]}}\hspace{-0.1 in}234\cdots d 
$$
where $N_{(1,1)}^{[1]}\in\N$ is chosen such that $N_{(1,1)}^{[1]}>\kappa_1|w_1^1|$.  Next define the word 
$$
w_2^1:=\underbrace{w_1^1w_1^1\cdots w_1^1}_{\text{length }N_{(2,1)}^{[1]}}\underbrace{222\cdots2}_{\text{length }N_{(2,2)}^{[1]}}\underbrace{333\cdots3}_{\text{length }N_{(2,3)}^{[1]}}\cdots\underbrace{ddd\cdots d}_{\text{length }N_{(2,d)}^{[1]}} 
$$
where $N_{(2,1)}^{[1]},\dots,N_{(2,d)}^{[1]}\in\N$ are chosen such that 
\begin{equation}\label{eq:control1}
\begin{aligned}
|w_1^1| & <(\delta_1)^2\cdot N_{(2,1)}^{[1]}<(\delta_1)^4\cdot N_{(2,d)}^{[1]}<(\delta_1)^6\cdot N_{(2,d-1)}^{[1]}<(\delta_1)^8\cdot N_{(2,d-2)}^{[1]} \\
& <\cdots<(\delta_1)^{2d-2}\cdot N_{(2,3)}^{[1]}<(\delta_1)^{2d}\cdot N_{(2,2)}^{[1]}
\end{aligned}
\end{equation}
and $N_{(2,2)}^{[1]}>\kappa_1 |w_2^1|$.  Note that the ordering of the lengths $N_{(2,k)}^{[1]}$ is important, 
with the index $k$ cyclically passing from $1$ to $d$ to $d-1$ and down to $2$.  
This choice of the lengths is used only 
in estimating the growth of $P_X(n)$; the exact choices of the lengths and the 
estimates of~\eqref{eq:control1} can be ignored for a first reading of Steps 1 and 2 of this construction.  

For $i<d$, inductively define the word 
\begin{multline*}
w_{i+1}^1:= \\ 
\underbrace{w_1^1w_1^1\cdots w_1^1}_{\text{length }N_{(i+1,1)}^{[1]}}\underbrace{w_2^1w_2^1\cdots w_2^1}_{\text{length }N_{(i+1,2)}^{[1]}}\cdots\underbrace{w_i^1w_i^1\cdots w_i^1}_{\text{length }N_{(i+1,i)}^{[1]}}\underbrace{(i+1)(i+1)\cdots(i+1)}_{\text{length }N_{(i+1,i+1)}^{[1]}}\cdots\underbrace{ddd\cdots d}_{\text{length }N_{(i+1,d)}^{[1]}}
\end{multline*} 
where $N_{(i+1,1)}^{[1]},\dots,N_{(i+1,d)}^{[1]}\in\N$ are chosen such that 
\begin{equation}\label{eq:control2}
\begin{aligned}
|w_i^1| & <(\delta_1)^2\cdot N_{(i+1,i)}^{[1]}<(\delta_1)^4\cdot N_{(i+1,i-1)}^{[1]}<(\delta_1)^6\cdot N_{(i+1,i-2)}^{[1]} \\
& <(\delta_1)^8\cdot N_{(i+1,i-3)}^{[1]}<\cdots<(\delta_1)^{2i}\cdot N_{(i+1,1)}^{[1]}<(\delta_1)^{2i+2}\cdot N_{(i+1,d)}^{[1]} \\
& <(\delta_1)^{2i+4}\cdot N_{(i+1,d-1)}^{[1]}<\cdots<(\delta_1)^{2d-2}\cdot N_{(i+1,i+2)}^{[1]}<(\delta_1)^{2d}\cdot N_{(i+1,i+1)}^{[1]}
\end{aligned}
\end{equation}
and $N_{(i+1,i+1)}^{[1]}>\kappa_1|w_{i+1}^1|$.   Again, the lengths are chosen such that we are able to 
control the growth of the complexity, and the index $k$ in $N_{(i+1,k)}^{[1]}$ is taken in a cyclical order.  

For each $i\in\mathcal{\A}$, it follows immediately from the construction that: 
	\begin{enumerate}[(a)] 
	\item \label{eq:a}
	Every letter in $\mathcal{A}$ appears in $w_i^1$; 
	\item \label{eq:b}
	The frequency with which the letter $i$ occurs in $w_i^1$ is at least $\kappa_1$. 
	\end{enumerate} 
	
We continue to define the words inductively.  Assuming that 
we have already defined words $w_1^j, w_2^j,\dots, w_d^j$, we define
$$
w_1^{j+1}:=\underbrace{w_1^jw_1^j\cdots w_1^j}_{\text{length }N_{(1,1)}^{[j]}}\underbrace{w_2^jw_2^j\cdots w_2^j}_{\text{length }N_{(1,2)}^{[j]}}\cdots\underbrace{w_d^jw_d^j\cdots w_d^j}_{\text{length }N_{(1,d)}^{[j]}} 
$$
where 
\begin{equation}\label{eq:control3}
\begin{aligned}
|w_d^j| & <(\delta_{j+1})^2\cdot N_{(1,d)}^{[j+1]}<(\delta_{j+1})^4\cdot N_{(1,d-1)}^{[j+1]}<(\delta_{j+1})^6\cdot N_{(1,d-2)}^{[j+1]} \\
& <(\delta_{j+1})^8\cdot N_{(1,d-3)}^{[j+1]}<\cdots<(\delta_1)^{2d-2}\cdot N_{(1,2)}^{[j+1]}<(\delta_1)^{2d}\cdot N_{(1,1)}^{[j+1]} 
\end{aligned} 
\end{equation} 
and $N_{(1,1)}^{[j+1]}>\kappa_{j+1} |w_1^{j+1}|$.  
We have analogs of properties~\eqref{eq:a} and~\eqref{eq:b} for the base case of the construction: 
each of the words $w_1^j, w_2^j,\dots,w_d^j$ occurs as a subword of $w_1^{j+1}$ and the frequency with which the letter $1$ occurs in $w_1^{j+1}$ is at least $\prod_{k=1}^{j+1}\kappa_k$, provided that the frequency with which it occurs in $w_1^j$ was at least $\prod_{k=1}^j\kappa_k$. 

Continuing inductively, for $i<d$, we define the word (note the change in superscript half way through) 
\begin{multline*}
w_{i+1}^{j+1}:=\\ 
\underbrace{w_1^{j+1}w_1^{j+1}\cdots w_1^{j+1}}_{\text{length }N_{(i+1,1)}^{[j+1]}}\underbrace{w_2^{j+1}\cdots w_2^{j+1}}_{\text{length }N_{(i+1,2)}^{[j+1]}}\cdots\underbrace{w_i^{j+1}\cdots w_i^{j+1}}_{\text{length }N_{(i+1,i)}^{[j+1]}}\underbrace{w_{i+1}^j\cdots w_{i+1}^j}_{\text{length }N_{(i+1,i+1)}^{[j+1]}}\cdots\underbrace{w_d^j\cdots w_d^j}_{\text{length }N_{(i+1,d)}^{[j+1]}} 
\end{multline*} 
where 
\begin{equation}\label{eq:control4}
\begin{aligned}
|w_i^{j+1}| & <(\delta_{j+1})^2\cdot N_{(i+1,i)}^{[j+1]}<(\delta_{j+1})^4\cdot N_{(i+1,i-1)}^{[j+1]}<(\delta_{j+1})^6\cdot N_{(i+1,i-2)}^{[j+1]} \\
& <(\delta_{j+1})^8\cdot N_{(i+1,i-3)}^{[j+1]}<\cdots<(\delta_{j+1})^{2i}\cdot N_{(i+1,1)}^{[j+1]} \\ 
& <(\delta_{j+1})^{2i+2}\cdot N_{(i+1,d)}^{[j+1]}<(\delta_{j+1})^{2i+4}\cdot N_{(i+1,d-1)}^{[j+1]}<\cdots \\ 
& <(\delta_{j+1})^{2d-2}\cdot N_{(i+1,i+2)}^{[j+1]}<(\delta_{j+1})^{2d}\cdot N_{(i+1,i+1)}^{[j+1]} 
\end{aligned} 
\end{equation}  
and $N_{(i+1,i+1)}^{[j+1]}>\kappa_{j+1}|w_{i+1}^{j+1}|$.  Again, we point out that the words $w_1^j, w_2^j,\dots,w_d^j$ occur as subwords of $w_{i+1}^{j+1}$, and the frequency with which the letter $i+1$ occurs in $w_{i+1}^{j+1}$ is at least $\prod_{k=1}^{j+1}\kappa_k$, provided that the frequency with which it occurs in $w_{i+1}^j$ was at least $\prod_{k=1}^j\kappa_k$. 

By induction, we obtain sequences $\{w_1^j\}_{j=1}^{\infty}, \{w_2^j\}_{j=1}^{\infty},\dots,\{w_d^j\}_{j=1}^{\infty}$ satisfying: 
	\begin{enumerate}[(a)] 
	\item For any $j>2$, any $1\leq k<j-1$, and any $i_1, i_2\in\A$, the word $w_{i_1}^k$ occurs in each of the words $w_1^{k+1}, w_2^{k+1},\dots, w_d^{k+1}$ and therefore occurs in $w_{i_2}^j$ (which may be written as a concatenation of these words) syndetically, and the maximal gap length is at most 
	$$
	g_k:=\max\{|w_l^{k+1}|\colon l\in\A\};  
	$$
	\item For any $i\in\A$ and any $j\in\N$, the frequency with which the letter $i$ occurs as a subword of $w_i^j$ is at least $\prod_{k=1}^j\kappa_k\geq\prod_{k=1}^{\infty}\kappa_k>1/2$. 
	\end{enumerate} 
	
We further note that given the freedom in which the lengths are chosen, we 
can assume that $N_{(i,k)}^{[j]}$ divides $N_{(i,k)}^{[j+1]}$ for all $i,k\in\A$ and all $j\in\N$.  We make this assumption for the remainder of the proof. 

\subsection*{Step 2} ({\em Construction and ergodic properties of the subshift $(X,\sigma)$}): Observe that $w_1^j$ is the leftmost subword of $w_1^{j+1}$ for all $j\in\N$, and so we can define a (one-sided) infinite word $w_1^{\infty}$ by declaring that for all $j$, the leftmost subword of $w_1^{\infty}$ of length $|w_1^j|$ is $w_1^j$.  Then for any $i\in\A$ and any $j\in\N$, the word $w_i^j$ occurs as a subword of $w_1^{\infty}$ syndetically.  Moreover, every subword of $w_1^{\infty}$ occurs as a sub-subword of $w_1^j$ for some $j$.  Therefore all subwords of $w_1^{\infty}$ occur syndetically.  

Let $X\subset\A^{\Z}$ be the set of all bi-infinite sequences whose language is comprised only of 
subwords of $w_1^{\infty}$, 
meaning it is the natural extension of the closure of $w_1^{\infty}$ under $\sigma$.  Then $(X,\sigma)$ is minimal and  $w_i^j\in\mathcal{L}(X)$ for all $i\in\A$ and $j\in\N$.  Therefore, for fixed $i\in\A$, there are arbitrarily long words in $\mathcal{L}(X)$ for which the frequency with which the letter $i$ occurs is at least $\prod_{k=1}^{\infty}\kappa_k>1/2$.  Consequently, there exists an ergodic measure $\mu_i$ supported on $X$ for which $\mu_i([i])>1/2$.  It follows that $\mu_i([j])<1/2$ for all $j\neq i$ and so $\mu_j\neq\mu_i$ for any $j\neq i$.  Thus $(X,\sigma)$ has at least $d$ ergodic measures. 
 If we can show that 
$$
\liminf_{n\to\infty}\frac{P_X(n)}{n}<d+1, 
$$
then there are at most $d$ ergodic measures by Theorem~\ref{theorem:main}; hence exactly $d$.  So it remains only to show:  
\begin{eqnarray*} 
\liminf_{n\to\infty}\frac{P_X(n)}{n}&=&d, \\
\limsup_{n\to\infty}\frac{P_X(n)}{n}&=&d+1. 
\end{eqnarray*} 

\medskip

\subsection*{Step 3} ({\em Analysis of the growth rate of $P_X(n)$}): Let $n>|w_1^2|$ be a fixed integer.  We estimate the number of words in $\mathcal{L}_n(X)$ (recall that this number is, by definition, $P_X(n)$).  By construction, 
$$
|w_1^1|<|w_2^1|<\cdots<|w_d^1|<|w_1^2|<|w_2^2|<\cdots<|w_d^2|<|w_1^3|<\cdots 
$$ 
We make the convention that $w_{d+1}^j:=w_1^{j+1}$, $w_{d+2}^j:=w_2^{j+1}$, and so on (with the analogous convention for $N_{(i_1,i_2)}^{[j]}$ when $i_2>d$).  Therefore, there exist $i_1\in\A$ and $j_1\in\N$ such that 
$$ 
|w_{i_1}^{j_1}|\leq n<|w_{i_1+1}^{j_1}|. 
$$ 

With this convention, observe that 
\begin{multline*} 
w_{i_1+1}^{j_1+1}:= \\ 
\underbrace{w_1^{j_1+1}\cdots w_1^{j_1+1}}_{\text{length }N_{(i_1+1,1)}^{[j_1+1]}}\underbrace{w_2^{j_1+1}\cdots w_2^{j_1+1}}_{\text{length }N_{(i_1+1,2)}^{[j_1+1]}}\cdots\underbrace{w_{i_1}^{j_1+1}\cdots w_{i_1}^{j_1+1}}_{\text{length }N_{(i_1+1,i_1)}^{[j_1+1]}}\underbrace{w_{i_1+1}^{j_1}\cdots w_{i_1+1}^{j_1}}_{\text{length }N_{(i_1+1,i_1+1)}^{[j_1]}}\cdots\underbrace{w_d^{j_1}\cdots w_d^{j_1}}_{\text{length }N_{(i_1+1,d)}^{[j_1]}}, 
\end{multline*} 
where 
\begin{equation}\label{eq:size} 
n<|w_{i_1+1}^{j_1}|<|w_{i_1+2}^{j_1}|<\cdots<|w_d^{j_1}|<|w_1^{j_1+1}|<\cdots<|w_{i_1}^{j_1+1}|. 
\end{equation} 
It follows from the construction that if $i_2\in\A$ and $j_2\in\N$ is such that $|w_{i_2}^{j_2}|\geq|w_{i_1+1}^{j_1+1}|$, then $w_{i_2}^{j_2}$ can also be written as a concatenation of words from the set 
$$\{w_1^{j_1+1},w_2^{j_1+1},\dots,w_{i_1}^{j_1+1},w_{i_1+1}^{j_1}, w_{i_1+2}^{j_1},\dots,w_d^{j_1}\}  = \{w_{i_1}^{j_1}, w_{i_1+1}^{j_1}, w_{i_1+2}^{j_1},\dots,w_{i_1+d-1}^{j_1}\}. 
$$
Moreover, there are restrictions on the order in which these words may be concatenated in $w_{i_2}^{j_2}$: 
	\begin{enumerate} 
	\item If $i_1+1\leq i<i_1+d$, then the only words that may be concatenated with $w_i^{j_1}$ are $w_i^{j_1}$ itself and 
	$w_{i+1}^{j_1}$; 
	\item The only words that may be concatenated with $w_{i_1+d}^{j_1}(=w_{i_1}^{j_1+1})$ are $w_{i_1}^{j_1+1}$ itself and $w_{i_1+1}^{j_1}$. 
	\end{enumerate} 
Therefore, by~\eqref{eq:size}, the only words of length $n$ that appear as subwords of $w_{i_2}^{j_2}$ are those which appear as subwords of words from the set: 
\begin{equation}\label{eq:set} 
\left\{w_i^{j_1}w_i^{j_1}\colon i_1<i\leq i_1+d\right\}\cup\left\{w_i^{j_1}w_{i+1}^{j_1}\colon i_1<i<i_1+d\right\}\cup\left\{w_{i_1+d}^{j_1}w_{i_1+1}^{j_1}\right\} , 
\end{equation} 
with superscripts following the convention that if the subscript is larger than $d$, increment the superscript by $1$.  
Since all words in $\mathcal{L}_n(X)$ occur as subwords of $w_1^{j_2}$ for all sufficiently large $j_2$, we have that all words in $\mathcal{L}_n(X)$ appear as subwords of the $2d$ words in the set in~\eqref{eq:set}. 

We now analyze the words that appear in~\eqref{eq:set} by decomposing them into words of length comparable to $n$.  By construction, if $k\geq1$ then $w_{i_1+k}^{j_1}$ can be written as a concatenation of words from the set (recall the divisibility of the lengths assumed at the end of Step 1) 
\begin{equation}\label{eq:set3} 
\left\{w_{i_1+1}^{j_1},\underbrace{w_{i_1+2}^{j_1-1}\cdots w_{i_1+2}^{j_1-1}}_{N_{(i_1+2,i_1+2)}^{[j_1]}},\dots,\underbrace{w_d^{j_1-1}\cdots w_d^{j_1-1}}_{N_{(i_1+2,d)}^{[j_1]}},\underbrace{w_1^{j_1}\cdots w_1^{j_1}}_{N_{(i_1+2,1)}^{[j_1]}},\dots,\underbrace{w_{i_1}^{j_1}\cdots w_{i_1}^{j_1}}_{N_{(i_1+2,i_1)}^{[j_1]}}\right\} 
\end{equation} 
obeying the analogous rules for concatenation (a word may concatenate with itself or with the word whose subscript is one larger, understood cyclically).  Moreover, 
\begin{equation}\label{eq:set2} 
|w_{i_1+2}^{j_1-1}|<|w_{i_1+3}^{j_1-1}|<\cdots<|w_d^{j_1-1}|<|w_1^{j_1}|<\cdots<|w_{i_1}^{j_1}|\leq n<|w_{i_1+1}^{j_1}| 
\end{equation} 
and 
\begin{equation}\label{eq:condition} 
	n<|w_{i_1+1}^{j_1}|<N_{(i_1+2,m)}^{[j_1]} 
\end{equation} 
for all $1\leq m\leq d$ (again, if $i_1+2>d$ then increment the superscript of $N_{(i_1+2,m)}^{[j_1]}$ by one and reduce the subscript by $d$).  In particular, every word in the set~\eqref{eq:set} can be obtained by concatenating words from the set~\eqref{eq:set3}. 

For $i_1+2\leq i<i_1+d+1$, define 
$$ 
p_i:=\cdots w_i^{j_1-1}w_i^{j_1-1}w_i^{j_1-1}w_{i+1}^{j_1-1}w_{i+1}^{j_1-1}w_{i+1}^{j_1-1}\cdots 
$$ 
to the the bi-infinite word whose restriction to to the set $\{n\geq0\}$ is an infinite concatenation of the word $w_{i+1}^{j_1-1}$ with itself, and whose restriction to the set $\{n<0\}$ is an infinite concatenation of the word $w_i^{j_1-1}$ with itself.  Similarly define 
$$ 
p_{i_1+d+1}:=\cdots w_{i_1+1}^{j_1}w_{i_1+1}^{j_1}w_{i_1+1}^{j_1}w_{i_1+2}^{j_1-1}w_{i_1+2}^{j_1-1}w_{i_1+2}^{j_1-1}\cdots 
$$ 
The set of words length $n$ that arise by concatenating words from the set~\eqref{eq:set3} is precisely the set of words of length $n$ that appear in $p_{i_1+1}, p_{i_1+2},\dots,p_{i_1+d}$, by~\eqref{eq:condition}.  By the estimates in~\eqref{eq:control1},~\eqref{eq:control2},~\eqref{eq:control3}, and~\eqref{eq:control4}, we have that 
$$ 
|w_i^{j_1-1}|<\delta_{j_1}\cdot|w_{i_1}^{j_1}|\leq\delta_{j_1}\cdot n 
$$ 
for all $i_1+1<i<d+i_1$.  It follows that:
	\begin{enumerate} 
	\item If $i_1+1<i<i_1+d-1$, then the number of factors of $p_i$ of length $n$ is at least $n+1$ (since $p_i$ is aperiodic) and 
	at most $n+2\delta_{j_1}n$ (there are at most $\delta_{j_1}n$ factors in each ``periodic part'' of $p_i$ and at most $n$ 
	transitional factors obtained from words that overlap the origin); 
	\item The number of factors of $p_{i_1+d-1}$ of length $n$ is at least $n+1$ and at most $n+\delta_{j_1}n+|w_{i_1}^{j_1}|$; 
	\item The only new factors of $p_{i_1+d}$ are the $n+1$ transitional factors which appear in 
	$\underbrace{w_{i_1}^{j_1}\cdots w_{i_1}^{j_1}}_{N_{(i_1+2,i_1)}^{[j_1]}}w_{i_1+1}^{j_1}$ as well as factors that appear in  
	$w_{i_1+1}^{j_1}w_{i_1+1}^{j_1}$; 
	\item The only new factors of $p_{i_1+1}$ are the $n+1$ transitional factors which appear in $w_{i_1+1}^{j_1}\underbrace{w_{i_1+2}^{j_1-1}\cdots w_{i_1+2}^{j_1-1}}_{N_{(i_1+2,i_1+2)}^{[j_1]}}$.  
	\end{enumerate} 
Thus we are left with counting subwords of $w_{i_1+1}^{j_1}w_{i_1+1}^{j_1}$ that have not already appeared.

Write 
\begin{multline*}
w_{i_1+1}^{j_1}:= \\
\underbrace{w_1^{j_1}\cdots w_1^{j_1}}_{\text{length }N_{(i_1+1,1)}^{[j_1]}}\underbrace{w_2^{j_1}\cdots w_2^{j_1}}_{\text{length }N_{(i_1+1,2)}^{[j_1]}}\cdots\underbrace{w_{i_1}^{j_1}\cdots w_{i_1}^{j_1}}_{\text{length }N_{(i_1+1,i_1)}^{[j_1]}}\underbrace{w_{i_1+1}^{j_1-1}\cdots w_{i_1+1}^{j_1-1}}_{\text{length }N_{(i_1+1,i_1+1)}^{[j_1]}}\cdots\underbrace{w_d^{j_1-1}\cdots w_d^{j_1-1}}_{\text{length }N_{(i_1+1,d)}^{[j_1]}}.  
\end{multline*}
Then by~\eqref{eq:control4} and the observation that $|w_{i_1+1}^{j_1}|<N_{(i_1+1,i_1+1)}^{[j_1]}/\delta_{j_1}$ for all sufficiently large $j_1$, we have 
\begin{equation*}
\begin{aligned}
|w_{i_1}^{j_1}| & <\delta_{j_2}\cdot N_{(i_1+1,i_1)}^{[j_1]}<N_{(i_1+1,i_1)}^{[j_1]}/\delta_{j_1}<\delta_{j_1}\cdot N_{(i_1+1,i_1-1)}^{[j_1]} <N_{(i_1+1,i_1-1)}^{[j_1]}/\delta_{j_1} \\ 
& <\delta_{j_1}\cdot N_{(i_1+1,i_1-2)}^{[j_1]}<N_{(i_1+1,i_1-2)}^{[j_1]}/\delta_{j_1}<\delta_{j_1}\cdot N_{(i_1+1,i_1-3)}^{[j_1]}<N_{(i_1+1,i_1-3)}^{[j_1]}/\delta_{j_1} \\ 
& <\cdots<\delta_{j_1}\cdot N_{(i_1+1,1)}^{[j_1]}<N_{(i_1+1,1)}^{[j_1]}/\delta_{j_1}<\delta_{j_1}\cdot N_{(i_1+1,d)}^{[j_1]}<N_{(i_1+1,d)}^{[j_1]}/\delta_{j_1} \\ 
& <\delta_{j_1}\cdot N_{(i_1+1,d-1)}^{[j_1]}<N_{(i_1+1,d-1)}^{[j_1]}/\delta_{j_1}<\cdots<\delta_{j_1}\cdot N_{(i_1+1,i_1+2)}^{[j_1]} \\ 
& <N_{(i_1+1,i_1+2)}^{[j_1]}/\delta_{j_1}<\delta_{j_1}\cdot N_{(i_1+1,i_1+1)}^{[j_1]}<|w_{i_1+1}^{j_1}|<N_{(i_1+1,i_1+1)}^{[j_1]}/\delta_{j_1}. 
\end{aligned} 
\end{equation*} 
Thus there are four possibilities: 
	\begin{enumerate}
	\item $|w_{i_1}^{j_1}|\leq n<N_{(i_1+1,i_1)}^{[j_1]}$; 
	\item $N_{(i_1+1,i_1)}^{[j_1]}\leq n<N_{(i_1+1,i_1-1)}^{[j_1]}$ (indices taken modulo $d$);
	\item $N_{(i_1+1,i_1-1)}^{[j_1]}\leq n<N_{(i_1+1,i_1+1)}^{[j_1]}$ and there exists $i_2\in\A\setminus\{i_1,i_1+1\}$ such that 
	$$ 
	N_{(i_1+1,i_2)}^{[j_1]}\leq n<N_{(i_1+1,i_2-1)}^{[j_1]}\text{ (indices taken modulo $d$)}; 
	$$ 
	\item $n>N_{(i_1+1,i_1+1)}^{[j_1]}$. 
	\end{enumerate}
	
In case (i), there are no words of length $n$ in $w_{i_1+1}^{j_1}w_{i_1+1}^{j_1}$ that were not previously counted (all blocks in its decomposition are of length larger than $n$).  In this case we have the estimate 
\begin{equation}\label{eq:argument} 
P_X(n)\leq (d-1)\delta_{j_1}n+|w_{i_1}^{j_1}|+dn. 
\end{equation} 
In particular, since $|w_{i_1}^{j_1}|<\delta_{j_1}N_{(i_1+1,i_1)}^{[j_1]}$, we are in case (i) when $n=\left\lfloor\frac{|w_{i_1}^{j_1}|}{\delta_{j_1}}\right\rfloor$ and so equation~\eqref{eq:argument} holds.  This implies that 
$$ 
P_X\left(\left\lfloor\frac{|w_{i_1}^{j_1}|}{\delta_{j_1}}\right\rfloor\right)\leq (d+d\delta_{j_1})\cdot\left\lfloor\frac{|w_{i_1}^{j_1}|}{\delta_{j_1}}\right\rfloor. 
$$ 
This situation arises infinitely often (once for each $\delta_j$) and since $\delta_j\xrightarrow{j\to\infty}0$, 
$$ 
\liminf_{n\to\infty}\frac{P_X(n)}{n}\leq d. 
$$ 
Combining this with the fact that $(X,\sigma)$ has at least $d$ distinct nonatomic ergodic measures and applying Theorem~\ref{theorem:main}, we have that 
$$ 
\liminf_{n\to\infty}\frac{P_X(n)}{n}=d. 
$$ 
In particular, this implies that there are exactly $d$ ergodic measures.  

In case (ii),  we have 
$$ 
N_{(i_1+1,i_1)}^{[j_1]}\leq n<N_{(i_1+1,i_1-1)}^{[j_1]} 
$$ 
and $n<N_{(i_1+1,k)}^{[j_1]}$ for all $k\in\A\setminus\{i_1\}$.  In this case, the only new words of length $n$ that arise in $w_{i_1+1}^{j_1}w_{i_1+1}^{j_1}$ are the $n-N_{(i_1+1,i_1)}^{[j_1]}$ transitional words that arise in 
$$ 
\underbrace{w_{i_1-1}^{j_1}\cdots w_{i_1-1}^{j_1}}_{\text{length }N_{(i_1+1,i_1-1)}^{[j_1]}}\underbrace{w_{i_1}^{j_1}\cdots w_{i_1}^{j_1}}_{\text{length }N_{(i_1+1,i_1)}^{[j_1]}}\underbrace{w_{i_1+1}^{j_1-1}\cdots w_{i_1+1}^{j_1-1}}_{\text{length }N_{(i_1+1,i_1+1)}^{[j_1]}}, 
$$ 
where a word is transitional if it completely contains the middle block (all other blocks have length larger than $n$ and so contribute no new words).  Thus, in case (ii), 
\begin{equation}\label{eq:condition3} 
P_X(n)\leq dn+2d\delta_{j_1}n+\left(n-N_{(i_1+1,i_1)}^{[j_1]}\right)\leq(d+1)n+2d\delta_{j_1}. 
\end{equation} 

In case (iii), 
\begin{equation}\label{eq:estimate-a} 
n\geq N_{(i_1+1,i_2)}^{[j_1]}>\delta_{j_1}\cdot N_{(i_1+1,i_2+1)}^{[j_1]}>\delta_{j_1}\cdot N_{(i_1+1,i_2+2)}^{[j_1]}>\cdots>\delta_{j_1}\cdot N_{(i_1+1,i_1)}^{[j_1]} 
\end{equation} 
and 
$$ 
n<N_{(i_1+1,i_2-1)}^{[j_1]}<N_{(i_1+1,i_2-2)}^{[j_1]}<\cdots<N_{(i_1+1,i_1+1)}^{[j_1]} 
$$ 
by~\eqref{eq:control1},~\eqref{eq:control2},~\eqref{eq:control3}, and~\eqref{eq:control4}.  Therefore the only new words of length $n$ that arise in $w_{i_1+1}^{j_1}w_{i_1+1}^{j_1}$ are the transitional words that arise in 
$$ 
\underbrace{w_{i_2-1}^{j_1}\cdots w_{i_2-1}^{j_1}}_{\text{length }N_{(i_1+1,i_2-1)}^{[j_1]}}\underbrace{w_{i_2}^{j_1}\cdots w_{i_2}^{j_1}}_{\text{length }N_{(i_1+1,i_2)}^{[j_1]}}\cdots\underbrace{w_{i_1}^{j_1}\cdots w_{i_1}^{j_1}}_{\text{length }N_{(i_1+1,i_1)}^{[j_1]}}\underbrace{w_{i_1+1}^{j_1-1}\cdots w_{i_1+1}^{j_1-1}}_{\text{length }N_{(i_1+1,i_1+1)}^{[j_1]}}. 
$$ 
That is, this word decomposes into blocks the first and last of which have length larger than $n$; the transitional words are those that fully contain one of the blocks of length smaller than $n$.  There are at most 
$$ 
\left(n-N_{(i_1+1,i_2)}^{[j_1]}\right)+N_{(i_1+1,i_2+1)}^{[j_1]}+N_{(i_1+1,i_2+2)}^{[j_1]}+\cdots+N_{(i_1+1,i_1-1)}^{[j_1]}+\left(n-N_{(i_1+1,i_1)}^{[j_1]}\right) 
$$ 
such blocks.  By~\eqref{eq:estimate-a}, this is at most $n-N_{(i_1+1,i_2)}^{[j_1]}+d\delta_{j_1}n$.  So in case (iii), 
\begin{equation}\label{eq:calculation1}
P_X(n)\leq dn+2d\delta_{j_1}n+n-N_{(i_1+1,i_2)}^{[j_1]}+d\delta_{j_1}n\leq(d+1)n+3d\delta_{j_1}n. 
\end{equation} 

Finally, in case (iv), the only new words are the transitional words that occur in 
$$ 
\underbrace{w_{i_1+1}^{j_1-1}\cdots w_{i_1+1}^{j_1-1}}_{\text{length }N_{(i_1+1,i_1+1)}^{[j_1]}}\underbrace{w_{i_1+2}^{j_1-1}\cdots w_{i_1+2}^{j_1-1}}_{\text{length }N_{(i_1+1,i_1+2)}^{[j_1]}}\cdots\underbrace{w_{i_1}^{j_1}\cdots w_{i_1}^{j_1}}_{\text{length }N_{(i_1+1,i_1)}^{[j_1]}}\underbrace{w_{i_1+1}^{j_1-1}\cdots w_{i_1+1}^{j_1-1}}_{\text{length }N_{(i_1+1,i_1+1)}^{[j_1]}}, 
$$ 
where now a word is transitional if it completely contains any of the blocks between the first and the last.  However, by~\eqref{eq:control1},~\eqref{eq:control2},~\eqref{eq:control3}, and~\eqref{eq:control4}, we have 
$$ 
\delta_{j_1}n\geq\delta_{j_1}N_{(i_1+1,i_1+1)}^{[j_1]}>N_{(i_1+1,k)}^{[j_1]} 
$$ 
for all $k\in\A\setminus\{i_1+1\}$, and so there are at most $n+d\delta_{j_1}$ such words.  Thus for case (iv), we have 
\begin{equation}\label{eq:calculation3} 
P_X(n)\leq dn+2d\delta_{j_1}n+n+d\delta_{j_1}=(d+1)n+3d\delta_{j_1}n. 
\end{equation} 

It follows from~\eqref{eq:argument},~\eqref{eq:condition3},~\eqref{eq:calculation1}, and~\eqref{eq:calculation3} that 
$$ 
\limsup_{n\to\infty}\frac{P_X(n)}{n}\leq d+1 
$$ 
and therefore is equal to $d+1$ by Theorem~\ref{theorem:main}.  This establishes the theorem. 
\end{proof} 

We end with several constructions showing various senses in which our results cannot be improved.  We first review 
some standard facts about Sturmain shifts.  A {\em Sturmian shift} $(Y, \sigma)$ is a minimal subshift of $\{0,1\}^{\Z}$ whose complexity function satisfies $P_Y(n)=n+1$ for all $n\in\N$.   
Any Sturmian shift is uniquely ergodic, and for any $\alpha\in (0,1)\setminus\mathbb{Q}$, there exists a Sturmian 
shift $(Y_\alpha, \sigma)$ whose unique invariant probability measure $\mu$ 
satisfies $\mu([0]) = \alpha$.  In particular, there are uncountably many distinct Sturmian shifts.  

We first show that the technical condition (that there exists a generic measure $\mu$ and a generic point $x_{\mu}$ such that the orbit closure of $x_{\mu}$ is not uniquely ergodic) cannot be dropped from the second statement in Theorem~\ref{theorem:main}: 
\begin{proposition} 
For $d\geq 1$, there exists a subshift $(X,\sigma)$ which has precisely $d$ ergodic measures, zero nonergodic generic measures, and whose complexity function satisfies $P_X(n)=dn+d$ for all $n\in\N$.  This subshift has the property that every $x\in X$ is generic for some ergodic measure and the orbit closure of any point is uniquely ergodic.  
\end{proposition}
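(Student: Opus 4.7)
The plan is to construct $X$ as the disjoint union (in the alphabet sense) of $d$ distinct Sturmian shifts. Fix $d$ pairwise disjoint two-letter alphabets $\A_1,\dots,\A_d$, and for each $1\leq i\leq d$ choose an irrational $\alpha_i\in(0,1)$ with the $\alpha_i$ pairwise distinct. Let $(Y_i,\sigma)$ be a Sturmian subshift of $\A_i^{\Z}$ with unique invariant probability measure $\mu_i$ satisfying $\mu_i([a_i])=\alpha_i$ for a designated letter $a_i\in\A_i$; this forces the $\mu_i$ to be pairwise distinct. Set $\A:=\A_1\cup\cdots\cup\A_d$ and $X:=Y_1\cup\cdots\cup Y_d\subseteq\A^{\Z}$. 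Because the $\A_i$ are pairwise disjoint, $X$ is closed and shift-invariant, and no word of positive length in $\mathcal{L}(X)$ uses letters from two different $\A_i$; consequently $\mathcal{L}_n(X)=\bigsqcup_{i=1}^d\mathcal{L}_n(Y_i)$ for every $n\geq 1$, so $P_X(n)=\sum_{i=1}^d P_{Y_i}(n)=d(n+1)=dn+d$.

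Next I would read off the ergodic-theoretic properties. Each $(Y_i,\sigma)$ is minimal and uniquely ergodic, so by the uniform ergodic theorem every point of $Y_i$ is generic for $\mu_i$. In particular, every $x\in X$ lies in exactly one $Y_i$ and is generic for $\mu_i$, so each $\mu_i$ is a generic measure on $(X,\sigma)$, giving at least $d$ distinct ergodic generic measures. For the orbit closure condition: for any $x\in Y_i$, the orbit closure $\overline{\{\sigma^k x:k\in\Z\}}$ is a nonempty closed $\sigma$-invariant subset of the minimal system $Y_i$, so it equals $Y_i$, which is uniquely ergodic.

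The only substantive step is verifying that there are no nonergodic generic measures and no additional ergodic ones. Suppose $\nu$ is any generic measure on $X$ with generic point $x_\nu\in X$. Since $\A_1,\dots,\A_d$ are disjoint, $x_\nu\in Y_{i_0}$ for a unique $i_0$, and the entire orbit $\{\sigma^n x_\nu\}$ stays in $Y_{i_0}$. By the previous paragraph $x_\nu$ is also generic for $\mu_{i_0}$, and Birkhoff averages of continuous functions along $x_\nu$ have a unique limit, so $\nu=\mu_{i_0}$. This shows that the set of generic measures on $(X,\sigma)$ is exactly $\{\mu_1,\dots,\mu_d\}$, all of which are ergodic; in particular there are no nonergodic generic measures. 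Combining these facts yields the proposition, and there is essentially no obstacle: the construction is transparent and the only thing one must take care with is the bookkeeping that disjointness of the alphabets prevents any mixing of the Sturmian components either at the level of the language or at the level of orbits.
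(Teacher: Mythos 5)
Your proof is correct and follows essentially the same route as the paper: the paper likewise builds $X$ as a union of $d$ Sturmian subshifts on pairwise disjoint two-letter alphabets (it uses $d$ relabeled copies of a single Sturmian shift, whereas you take distinct rotation numbers $\alpha_i$, an immaterial difference since the measures are already distinguished by their disjoint supports), and derives $P_X(n)=dn+d$, genericity of every point, and the absence of nonergodic generic measures exactly as you do.
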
 
\begin{proof} 
Fix $d\in\N$ and fix a Sturmian shift $(Y,\sigma)$ on the alphabet $\{0,1\}$.  Let $\mathcal{A}:=\{0_1,1_1,0_2,1_2,\dots,0_d,1_d\}$ and for $1\leq i\leq d$ let $Y_i\subset\A^{\Z}$ be the image of $(Y,\sigma)$ under the $1$-block code that sends $0\mapsto0_i$ and $1\mapsto1_i$.  Let 
$$ 
X:=\bigcup_{i=1}^dY_i\subset\A^{\Z} 
$$ 
and observe that $X$ is closed and $\sigma$-invariant.  Moreover, we have $P_X(n)=dn+d$ for all $n\in\N$.  Each subshift $Y_i\subset X$ supports a unique ergodic measure and so there are at least $d$ ergodic measures for $(X,\sigma)$.  Conversely, for each $x\in X$ there exists $1\leq i\leq d$ such that $x\in Y_i$.  Since $Y_i$ is uniquely ergodic, $x$ is generic for the (unique) ergodic measure supported on $Y_i$.  Thus there can be no other measures that have a generic point. 
\end{proof} 

Finally we show that the assumption of linear growth in Theorem~\ref{th:upper} is optimal, in the sense that there is no analog of Theorem~\ref{th:upper} with an assumption of a superlinear growth rate and conclusion that the set of ergodic measures is finite for all subshifts whose complexity function grows at most at that rate.  

\begin{proposition} 
Let $(p_n)_{n=1}^\infty$ be a sequence of real numbers such that  
$$ 
\liminf_{n\to\infty}\frac{p_n}{n}=\infty. 
$$ 
Then there exists a subshift $(X,\sigma)$ which has infinitely many nonatomic ergodic measures and is such that for all but finitely many $n$, we have $P_X(n)\leq p_n$. 
\end{proposition}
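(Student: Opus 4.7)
The plan is to realize $X$ as the closure of a countable union of Sturmian subshifts of $\{0,1\}^{\Z}$ whose parameters tend rapidly to zero. The key observation we exploit is that any two Sturmians with sufficiently small parameter share the same length-$n$ language, so the many Sturmians with tiny parameter cost no extra complexity at level $n$.

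Since $\liminf_{n\to\infty} p_n/n=\infty$, we may fix a nondecreasing integer sequence $(c_n)_{n\in\N}$ with $c_n\to\infty$ and $(c_n+1)(n+1)\leq p_n$ for all sufficiently large $n$. First I would choose distinct irrationals $\alpha_1>\alpha_2>\cdots$ in $(0,1)$ with $\alpha_k\to 0$, arranged so that $\alpha_k<1/n$ whenever $k>c_n$; this is possible because $c_n\to\infty$. Let $Y_k:=Y_{\alpha_k}$ be the Sturmian shift whose unique invariant measure $\mu_{\alpha_k}$ satisfies $\mu_{\alpha_k}([0])=\alpha_k$, and set
$$
X:=\overline{\bigcup_{k\geq 1}Y_k}\subseteq\{0,1\}^{\Z}.
$$

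Because each $Y_k$ is minimal, uniquely ergodic, and a closed $\sigma$-invariant subset of $X$, each $\mu_{\alpha_k}$ is a nonatomic ergodic measure for $(X,\sigma)$; since $\mu_{\alpha_k}([0])=\alpha_k$ with distinct values, these measures are pairwise distinct, so $(X,\sigma)$ already has infinitely many nonatomic ergodic measures. For the complexity estimate I would use the standard Sturmian fact that whenever $\alpha<1/n$, any window of length $n$ in a sequence from $Y_\alpha$ contains at most one occurrence of the letter $0$; combined with $P_{Y_\alpha}(n)=n+1$ this forces $\mathcal{L}_n(Y_\alpha)$ to be precisely the $n+1$ words in $\{0,1\}^n$ having at most one $0$, independently of such $\alpha$. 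Thus only the Sturmians indexed by $k\leq c_n$ can contribute length-$n$ words beyond these universal $n+1$ words, yielding
$$
P_X(n)=\Bigl|\bigcup_{k\geq 1}\mathcal{L}_n(Y_{\alpha_k})\Bigr|\leq (c_n+1)(n+1)\leq p_n
$$
for all sufficiently large $n$.

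The main (mild) obstacle will be justifying the first equality above: that closing the union introduces no new length-$n$ words. This is straightforward because $\mathcal{L}_n(X)\subseteq\{0,1\}^n$ lies in a finite discrete set and convergence in $\{0,1\}^{\Z}$ is coordinatewise, so every length-$n$ word appearing in a limit point of $\bigcup_k Y_k$ already appears in some element of the union itself. One should also briefly observe that the new points added by the closure all lie in the $\sigma$-orbit closure of the fixed point $1^\infty$, and hence contribute only the atomic measure $\delta_{1^\infty}$, leaving the collection of nonatomic ergodic measures unchanged.
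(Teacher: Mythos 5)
Your argument is correct, but it takes a genuinely different route from the paper. The paper never uses the explicit structure of Sturmian languages: it runs a pigeonhole/diagonal argument, extracting nested uncountable families $A_n$ of parameters whose Sturmian shifts share a common language up to length $n$, and then recursively adjoins Sturmians $Y_{\alpha_{i+1}}$ that only branch off from the previous union at a length $M_i$ chosen so large that $p_n$ already exceeds $(i+1)(n+1)$ beyond that point; the complexity control is thus purely combinatorial-abstract. You instead exploit the concrete fact that every Sturmian with $\mu([0])=\alpha<1/n$ has the \emph{same} length-$n$ language, namely the $n+1$ binary words containing at most one $0$ (which follows from the balance property, or from the rotation coding, together with $P_{Y_\alpha}(n)=n+1$), so that all but the first $c_n$ shifts in your union are free at level $n$ and $P_X(n)\leq(c_n+1)(n+1)\leq p_n$. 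Your version is more elementary and quantitative, at the cost of invoking a specific (standard) structural fact about Sturmian words; the paper's version only needs the existence of uncountably many Sturmian shifts plus finiteness of the number of possible length-$n$ languages, so it is more robust to replacing the Sturmian family by another uncountable family of uniquely ergodic subshifts. The identification $\mathcal{L}_n(X)=\bigcup_k\mathcal{L}_n(Y_{\alpha_k})$ and the production of infinitely many distinct nonatomic ergodic measures are handled correctly and essentially as in the paper. One caveat: your closing remark is inaccurate as stated --- the orbit closure of the fixed point $1^\infty$ is just $\{1^\infty\}$, while the closure also contains points such as $\cdots 1110111\cdots$, and characterizing \emph{all} ergodic measures of $X$ would take more care --- but this remark is dispensable, since the proposition only asks for infinitely many nonatomic ergodic measures, which your $\mu_{\alpha_k}$ already provide.
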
 
\begin{proof} 
For each $n\in\N$, there exists a set $\mathcal{F}_n\subset\{0,1\}^n$ such that $|\mathcal{F}_n|=n+1$ and for uncountably many $\alpha\in(0,1)$, we have $\mathcal{L}_n(Y_{\alpha})=\mathcal{F}_n$.  For $N\leq n$, let $\mathcal{X}_N(\mathcal{F}_n)$ be the set of words of length $N$ that arise as a subword of a word in $\mathcal{F}_n$.  Clearly if $\mathcal{L}_n(Y_{\alpha})=\mathcal{F}_n$ then $\mathcal{L}_N(Y_{\alpha})=\mathcal{X}_N(\mathcal{F}_n)$.  Let $\mathcal{G}_1\subset\{0,1\}$ be such that for infinitely many $n\in\N$ we have $\mathcal{G}_1=\mathcal{X}_1(\mathcal{F}_n)$.  Inductively, we assume that we have defined $\mathcal{G}_i\in\{0,1\}^i$ for all $1\leq i<j$ such that 
	\begin{enumerate} 
	\item For all $1\leq j_1<j_2<j$ we have $\mathcal{G}_{j_1}=\mathcal{X}_{j_1}(\mathcal{G}_{j_2})$; 
	\item There are infinitely many $n$ for which $\mathcal{G}_{j-1}=\mathcal{X}_{j-1}(\mathcal{F}_n)$.   
	\end{enumerate} 
We then choose $\mathcal{G}_j\in\{0,1\}^j$ such that among those $n$ for which $\mathcal{G}_{j-1}=\mathcal{X}_{j-1}(\mathcal{F}_n)$, there are infinitely many $n$ for which $\mathcal{G}_j=\mathcal{X}_j(\mathcal{F}_n)$.  
In this way, we obtain an infinite sequence $\mathcal{G}_1,\mathcal{G}_2,\dots$ such that if $1\leq j_1<j_2$, 
then $\mathcal{G}_{j_1}=\mathcal{X}_{j_1}(\mathcal{G}_{j_2})$ and there are uncountably many $\alpha\in(0,1)$ for which $\mathcal{L}_{j_2}(Y_{\alpha})=\mathcal{G}_{j_2}$. 

For each $n\in\N$, set 
$$ 
A_n:=\{\alpha\in(0,1)\colon\mathcal{L}_n(Y_{\alpha})=\mathcal{G}_n\}. 
$$ 
Then by construction, $A_n$ is uncountable for all $n\in\N$,  
$$ 
A_1\supseteq A_2\supseteq A_3\supseteq\cdots 
$$ 
and for infinitely many $n\in\N$ we have $A_n\neq A_{n+1}$.  (If not, there exist distinct $\alpha_1,\alpha_2\in\cap A_n$, and so $\mathcal{L}_n(Y_{\alpha_1})=\mathcal{L}_n(Y_{\alpha_2})$,  contradicting the fact that the frequency with which the letter $0$ occurs as a subword of any word in $\mathcal{L}_n(Y_{\alpha_i})$ tends to $\alpha_i$ for $i=1,2$.)

We now construct the subshift.  Find $N_0\in\N$ such that for all $n\geq N_0$ we have $p_n>n+1$.  Fix $\alpha_1\in A_1$ and set $X_1:=Y_{\alpha_1}$.  Then $P_{X_1}(n)<p_n$ for all $n\geq N_0$.  
Find $N_1\in\N$ such that for all $n\geq N_1$ we have $p_n>2n+2$.  
Choose the smallest $M_1\geq N_1$ for which $A_{M_1+1}\neq A_{M_1}$ and let 
$\alpha_2\in A_{M_1}\setminus A_{M_1+1}$.  Then $\mathcal{L}_{M_1}(Y_{\alpha_2})=\mathcal{L}_{M_1}(Y_{\alpha_1})$, but $\mathcal{L}_{M_1+1}(Y_{\alpha_2})\neq\mathcal{L}_{M_1+1}(Y_{\alpha_1})$.  Set $X_2:=Y_{\alpha_1}\cup Y_{\alpha_2}$.  Then $P_{X_2}(n)=n+1$ for all $n\leq M_1$, but $n+1<P_{X_2}(n)\leq 2n+2$ for all $n>M_1$.  In particular, $P_{X_2}(n)\leq p_n$ for all $n\geq N_0$.  
Now recursively, suppose we have chosen $\alpha_1,\dots,\alpha_i$ in such a way that 
$X_i:=Y_{\alpha_1}\cup\cdots\cup Y_{\alpha_i}$ satisfies $P_{X_i}(n)\leq\min\{p_n,(i-1)n+(i-1)\}$ for all $n\geq N_0$.  Find $N_i\in\N$ such that for all $n\geq N_i$, we have $p_n>in+i$.  
Find $M_i\geq N_i$ such that $A_{M_i+1}\neq A_{M_i}$ and let $\alpha_{i+1}\in A_{M_i}\setminus A_{M_i+1}$ be distinct from $\alpha_1,\dots,\alpha_i$.  
Then $\mathcal{L}_{M_i}(Y_{\alpha_{i+1}})\subseteq\mathcal{L}_{M_i}(X_i)$ but $\mathcal{L}_{M_i+1}(Y_{\alpha_{i+1}})\not\subseteq\mathcal{L}_{M_i+1}(X_i)$.  Set $X_{i+1}:=X_i\cup Y_{\alpha_{i+1}}$.  Then $P_{X_{i+1}}(n)=P_{X_i}(n)$ for all $n\leq M_i$ and $P_{X_i}(n)<P_{X_{i+1}}(n)\leq in+i$ for all $n\geq M_i$.  Thus we obtain a sequence of subshifts 
$$ 
X_1\subset X_2\subset X_3\subset\cdots 
$$ 
such that for all $i\in\N$ and all $n\geq N_0$, we have $P_{X_i}(n)<p_n$.  Setting 
$$ 
X:=\overline{\bigcup_{i=1}^{\infty}X_i}, 
$$ 
we have that $\mathcal{L}_n(X)=\bigcup_{i=1}^{\infty}\mathcal{L}_n(X_i)$ for all $n\in\N$.  Therefore, for all $n\geq N_0$, the complexity satisfies $P_X(n)<p_n$.  On the other hand, for all $i\in\N$ we have $Y_{\alpha_i}\subset X$ and there is an ergodic probability supported on $Y_{\alpha_i}$.  Since $Y_{\alpha_i}\neq Y_{\alpha_j}$ for all $i\neq j$ by construction, $X$ has infinitely many ergodic measures. 
\end{proof}

\end{document}